\renewcommand{\geq}{\geqslant}
\renewcommand{\leq}{\leqslant} 
\renewcommand{\ge}{\geqslant}
\renewcommand{\le}{\leqslant} 
\newcommand{\R}{\mathbb{R}}
\newcommand{\Z}{\mathbb{Z}}
\newcommand{\Vis}{\mathrm{Vis}}
\newcommand{\defini}{\textbf}
\definecolor{col}{RGB}{180, 15, 20}
\date{September 2025}
\thanks{The authors wish to thank Marcelo Hil\'ario for bibliographical help.}
\author{Samuel Le Fourn}
\address{Université Grenoble Alpes, Institut Fourier \\ 100 rue des mathématiques, 38610 Gières, France}
\email{samuel.le-fourn@univ-grenoble-alpes.fr}
\urladdr{\href{https://www-fourier.univ-grenoble-alpes.fr/~lefourns}{\url{www-fourier.univ-grenoble-alpes.fr/~lefourns}}}
\author{Mike Liu}
\address{ENSAE, Fairplay joint team, CREST France\\
5 avenue Le Chatelier\\
91120 Palaiseau, France}
\email{mike.liu@ensae.fr}
\author{Sébastien Martineau}
\address{Sorbonne Universit\'e, LPSM\\
 4 place Jussieu, Case 158\\
 75252 Paris Cedex 05, France}
\email{smartineau@lpsm.paris}
\urladdr{\href{https://martineau-maths.github.io/webpage/}{martineau-maths.github.io/webpage}}
\title{Percolative properties of the random coprime colouring}
\begin{abstract}
   Given $u$ and $v$ in $\mathbb{Z}^d$, say that $u$ is visible from $v$ if the segment from $u$ to $v$ contains exactly two elements, which are $u$ and $v$. Take $X$ ``uniformly at random in $\mathbb{Z}^d$'' and colour each vertex $u$ of $\mathbb{Z}^d$ in white if $u$ is visible from $X$ and in black otherwise. Previous independent works of Pleasants--Huck and of the third author give a precise meaning to this definition.
   This paper is dedicated to the study of this random colouring from the point of view of percolation theory: given a reasonable graph structure on $\mathbb{Z}^d$, how many infinite black (resp. white) connected components are there?{\color{blue}}
\end{abstract}
\begin{document}

\maketitle

\section{Introduction}

Let $d\ge1$ and let $\Gamma$ be a lattice in $\R^d$, i.e. a discrete subgroup of $\R^d$ such that $\R^d/\Gamma$ is compact, for example $\Gamma=\Z^d$. Given some $u\in \Gamma$, declare an element $v$ of $\Gamma$ to be \defini{visible from $u$ (in $\Gamma$)} if $v$ is distinct from $u$ and the line segment $[u,v]$ intersects $\Gamma$ only at $u$ and $v$. We denote by $\Vis_\Gamma(u)$ the set of all vertices that are visible from $u$ in $\Gamma$. The lattice $\Gamma$ and the origin $u$ being fixed, can we get some useful description of $\Vis_\Gamma(u)$?

In some sense, it is easy to answer this question. 
Up to translating, we may assume that $u$ is equal to $0$. In that case, one easily checks that $\Vis_{\Gamma}(0)=\Gamma\setminus\bigcup_p p\Gamma$, where we use the following convention. 

\begin{enonce*}[remark]{Convention}\label{convention}
Throughout this paper, when $p$ appears as a subscript of an operator such as $\sum$, $\prod$ or $\bigcup$, it is implicitly meant that $p$ ranges over the set of prime numbers. For instance, if we write $\sum_{p>30}$, this means that we sum over primes $p$ that furthermore satisfy the property $p>30$.
\end{enonce*}

What does happen if we pick a random element $X$ in $\Gamma$ and try to understand the distribution of the set $\Vis_\Gamma(X)$ of all elements visible from $X$ in $\Gamma$ ? This depends on the distribution of $X$. The most natural choice would be to pick $X$ ``uniformly at random'' in $\Gamma$. As such, it makes no sense, as there is no uniform probability measure on the infinite discrete group $\Gamma$ --- Haar measures on $\Gamma$ are multiples of the counting measure, and none of these multiples has total mass 1. But, even though ``picking $X$ uniformly at random in $\Gamma$'' makes no sense, it turns out that it is possible to make sense of ``the distribution of $\Vis_\Gamma(X)$ for $X$ picked uniformly at random in $\Gamma$''. This was done in \cite{pleasantshuck, martineau}. Let us first explain what is this distribution. We defer to Section~\ref{sec:convergence} explanations regarding in which sense this distribution indeed deserves to be considered as ``the distribution of $\Vis_\Gamma(X)$ for $X$ picked uniformly at random in $\Gamma$''.

For each prime $p$, pick one of the $p^d$ cosets (a.k.a. translates) of $p\Gamma$ in $\Gamma$ uniformly at random, and do so independently for all primes. Write ${B}_p$ the coset selected for the prime $p$. Let us consider 
\[
{W}:=\Gamma\setminus \bigcup_p {B}_p,
\]
which is a random variable taking values in $\mathscr{P}(\Gamma)$. The set $\mathscr{P}(\Gamma)$ of all subsets of $\Gamma$ is endowed with the smallest $\sigma$-field such that for every $v\in \Gamma$, the map $P\longmapsto \mathds{1}_P(v)$ is measurable. This is also the smallest $\sigma$-field such that for every finite subset $F$ of $\Gamma$, the map $P\longmapsto P\cap F$ is measurable, its codomain being $\mathscr{P}(F)$ endowed with the $\sigma$-field of all subsets of  $\mathscr{P}(F)$. The probability measure $\rho$ we shall be interested in is the probability distribution of ${W}$, which is a probability measure on $\mathscr{P}(\Gamma)$. In other words, for every measurable $\mathcal{A}\subset \mathscr{P}(\Gamma)$, we set $\rho(\mathcal{A}):=\mathbf{P}({W}\in \mathcal{A})$.

\begin{figure}[h!!]
\centering
    \vspace{0.2cm}
    \includegraphics[width=9.1cm, height=9.1cm]{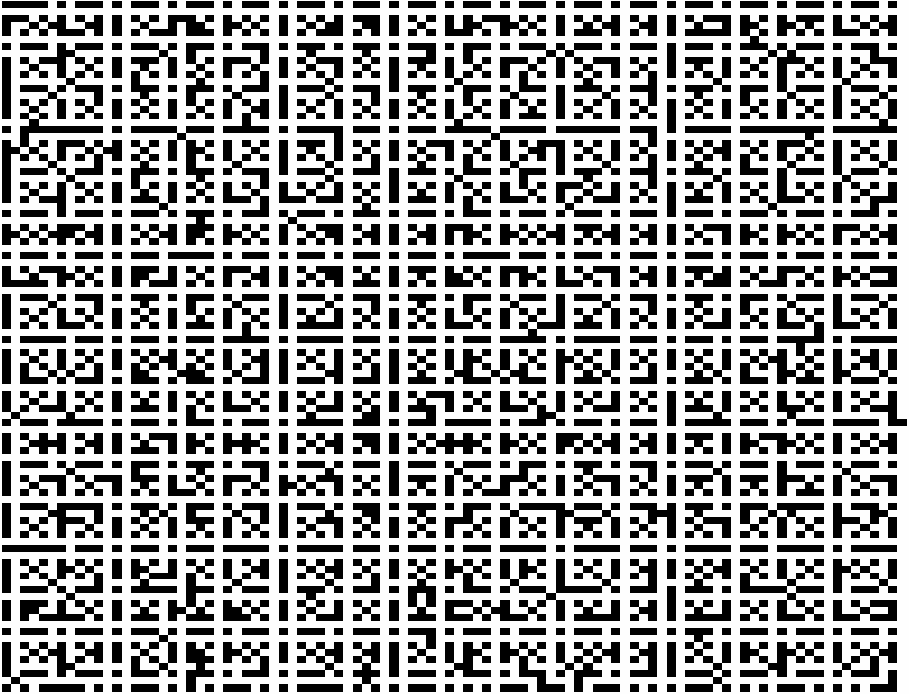}
    \vspace{0.2cm}
    \caption{A picture of $W$ for $\Gamma=\mathbb{Z}^2$. Elements of $W$ are depicted by white squares and elements of its complement are depicted by black squares. Which square corresponds to $(0,0)$ is irrelevant as the probability distribution of $W$ is invariant under translations.}
    \label{fig:simu}
\end{figure}

This paper is dedicated to studying $W$ and its complement from the point of view of percolation theory. Say that some $v\in\Gamma$ is \defini{white} if it belongs to $W$ and \defini{black} if it does not. Our task is to count how many infinite white (resp. black) connected components there are in this random colouring --- which can be called the \defini{random coprime colouring}, see Section~\ref{sec:primi}. However, for this question to make sense, we need to put a graph structure on $\Gamma$. Let us start with the most natural candidates, namely $\Gamma=\mathbb{Z}^d$ and the graph structure being the square lattice if $d=2$, the cubic lattice if $d=3$, etc.

\begin{figure}[h!!]
\centering
    \includegraphics[width=10cm]{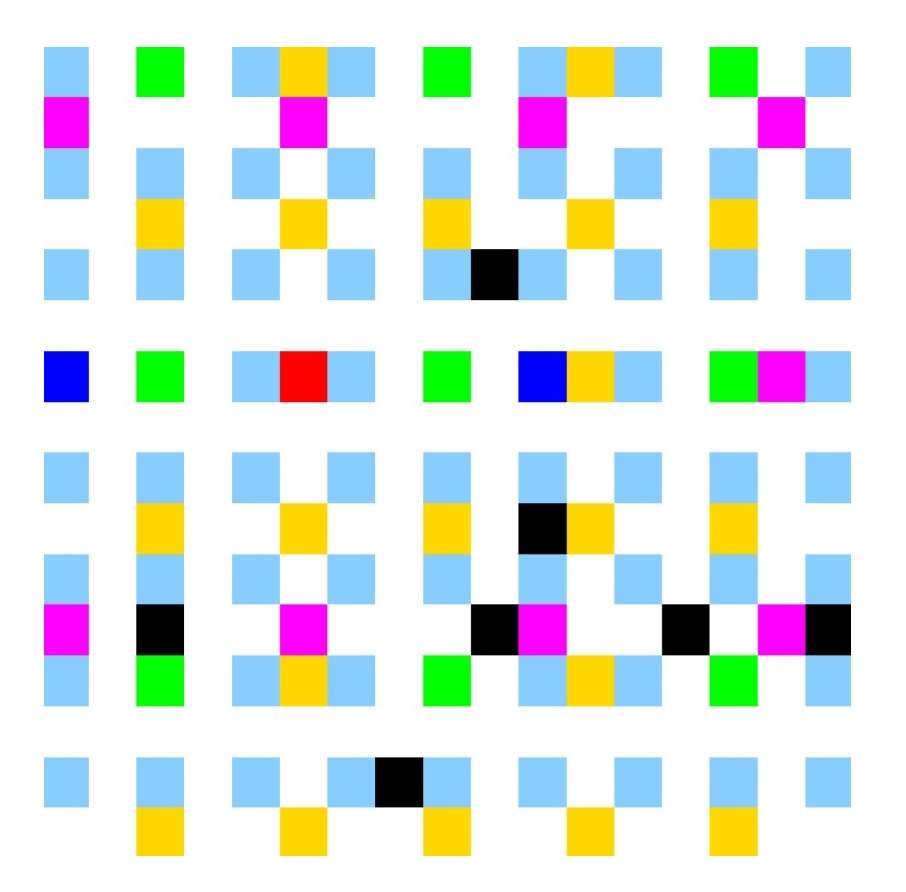}
    \caption{A picture of $W$ and its complement for $\Gamma=\mathbb{Z}^2$. Elements of $W$ are depicted by white squares. Elements of its complement, namely $\bigcup_p B_p$, are either in black or in colour. Belonging to $B_p$ for $p=2$ (resp. 3, 5) is indicated by the colour cyan (resp. yellow, magenta). Belonging to several $B_p$ for primes $p\le 5$ is represented by mixing additively the corresponding colours. It turns out that, in this picture, there is no conflict regarding the use of the colour white: no square of the depicted portion of $\mathbb{Z}^2$ belongs to $B_2\cap B_3\cap B_5$, so that white is never obtained by additive mixing to represent one of these squares that does not belong to $W$.}
    \label{fig:simu-bis}
\end{figure}

Adhering to the terminology of percolation theory, a graph structure on $\Gamma$ being fixed, we call \defini{black cluster} (resp. \defini{white cluster}) a connected component of black vertices (resp. white vertices).

\begin{theo}\label{thm:usual}
    Let $d\ge 2$ and let $\Gamma=\mathbb{Z}^d$ be endowed with its usual graph structure, meaning that two vertices are adjacent if and only if they differ at exactly one coordinate and by exactly 1, in absolute value.

    Then, the number of infinite white clusters is almost surely equal to 1, and that of infinite black clusters is almost surely equal to 0.
\end{theo}

\begin{rema}
The case $d=1$ is simple and different, as $W$ is almost surely empty. Indeed, the probability that a given element of $\mathbb{Z}$ belongs to $W$ is $\prod_p(1-\frac 1 p)$, which is equal to 0 as $\sum_p \frac1p=\infty$. Recall the convention of page~\pageref{convention}: we do not use the divergence of the harmonic series but the divergence of the sum of inverse of \emph{primes}.
\end{rema}

\begin{rema}
    Theorem~\ref{thm:usual} was derived in \cite[end of Section~2]{martineau} but the proof was very indirect. It had to combine the main theorem of \cite{martineau} with the work of \cite{vardi}, which itself built upon earlier works of Friedlander \cite{friedlander}. The present paper gives a direct proof of Theorem~\ref{thm:usual} which is much shorter, simpler, and more elementary.
\end{rema}

What can be said beyond the specific graphs of Theorem~\ref{thm:usual}? Let us ask the question in general terms and answer what we can. This will in particular shed some light on the key properties of these graphs that are used in the proof.
A natural and general setup to endow $\Gamma$ with a good graph structure consists in picking a Cayley graph structure on $\Gamma$. Let us recall some more definitions regarding Cayley graphs.

In our graphs, edges are unoriented, there are no multiple edges and no self-loops. In other words, we work with \emph{simple graphs}: a graph structure on a set is simply an irreflexive symmetric relation on it. Given a graph structure on $\Gamma$, the following conditions are equivalent \cite{sabidussi}:
\begin{enumerate}
    \item\label{item:conceptual} the graph is connected, every vertex has finitely many neighbours, and for every $a\in \Gamma$, the map $u\longmapsto u+a$ is a graph-automorphism,
    \item\label{item:concrete} there is a finite subset $S\subset \Gamma$ that generates $\Gamma$ as a group such that for every $(u,v)\in\Gamma^2$, the vertices $u$ and $v$ are adjacent if and only if $u-v\in S$.
\end{enumerate}
When these conditions hold, we say that we have a \defini{Cayley graph} of $\Gamma$. Notice that if we have a simple graph (the edges of which are unoriented and join distinct vertices), then any $S$ in Condition~\ref{item:concrete} will automatically satisfy $S=-S$ and $0\notin S$. Conversely, let us call a generating subset $S$ of $\Gamma$ \defini{admissible} if it is finite, and satisfies $S=-S$ and $0\notin S$. We then declare $u$ and $v$ in $\Gamma$ to be adjacent if and only if $u-v\in S$. Such a choice of $S$ therefore produces a Cayley graph structure on $\Gamma$, and it is the point of view we shall use here. We will not rely on its equivalence with the conceptual definition given by Condition~\ref{item:conceptual}, which we only mentioned because it stresses how natural the Cayley property is. The general problem we investigate goes as follows.

\begin{enonce}{Problem}
    \label{problem}
    Let $d\ge 2$ and let $\Gamma$ be a lattice in $\mathbb{R}^d$. For every admissible generating subset $S$ of $\Gamma$, answer the following questions, being understood that $\Gamma$ is endowed with the structure of Cayley graph that $S$ induces: how many infinite white  clusters are there? how many infinite black clusters are there?
\end{enonce}

Observe that the answer to the question may depend on $\Gamma$, $S$, and our choice of a colour, but not on the randomness of our configuration. Indeed, the probability measure $\rho$ is ergodic under the action of $\Gamma$ --- see e.g. \cite[page~9]{martineau}. Therefore, for all $\Gamma$, $S$, and colour, the answer to the question has an almost sure value.

We do not fully solve Problem~\ref{problem}. Still, we are able to go beyond Theorem~\ref{thm:usual}. As a warm-up, let us explain why certain choices of $S$ do lead to the existence of an infinite black cluster. As soon as $S$ contains some $u$ that can be written $u=nv$ for some $n\ge 2$ and $v\in \Gamma$, \label{page:exist-black}there is almost surely at least one infinite black cluster. Given such $(u,n,v)$, it suffices to pick a prime divisor $p$ of $n$ and to observe that $B_p$ is a black set containing a translate of the infinite path $(0,u,2u,3u,\dots)$. One can also produce examples of $S$ such that no element of $S$ can be written as $nv$ for $n\ge2$ and $v\in\Gamma$ but there is still at least one infinite black cluster. For instance, taking $\Gamma=\mathbb{Z}^d$, if $p$ and $q$ are distinct primes and if $S$ contains $v:=(p,-q,0,\dots,0)$ and $w:=(-p,pq+q,0,\dots,0)$, then the black set $B_p\cup B_q$ contains translates of the infinite path $(0\,,\,v\,,v+w\,,\,v+w+v\,,\,v+w+v+w\,,\,\dots)$. Indeed, the Chinese Remainder Theorem guarantees that $B_p\cup B_q$ is a translate of $p\mathbb{Z}^d\cup q\mathbb{Z}^d$, in which starting the path at $(0,q,0,\dots,0)$ works. You may also argue that $B_p\cup B_q$ has a positive probability to be $p\mathbb{Z}^d\cup q\mathbb{Z}^d$ and conclude by ergodicity.

Instead of going further in the direction of counterexamples, an interesting task is to try and extend the techniques used to establish Theorem~\ref{thm:usual} in order to prove that some natural examples of $(\Gamma,S)$ share (at least part of) the conclusion of Theorem~\ref{thm:usual}. In other words, we look for nice examples or setups where we can guarantee one or several of the following statements:
\begin{itemize}
    \item existence of an infinite white cluster,
    \item uniqueness of the infinite white cluster,
    \item inexistence of the infinite black cluster.
\end{itemize}
Let us point out that uniqueness of the infinite white cluster is a proper field of investigation here. Indeed, the random coprime colouring is not insertion-tolerant at all --- see Section~\ref{sec:context-perco} for definitions and a proof. Because of that, we cannot use the classical Burton--Keane argument \cite{burton-keane} to prove uniqueness of the infinite white cluster.

Regarding existence of an infinite white cluster, here is an easy proposition.

\begin{prop}[Existence of an infinite white cluster]
    \label{prop:exist-white}
    Let $d\ge \color{col}3$ and let $\Gamma$ be a lattice in $\mathbb{R}^d$. Let $S$ be an admissible generating subset of $\Gamma$. Endow $\Gamma$ 
 with the Cayley graph structure given by $S$.

    Then, there is almost surely at least one infinite white cluster.
\end{prop}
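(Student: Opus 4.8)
The plan is to exhibit, with positive probability, a single bi-infinite white path, and then to conclude by ergodicity. Since $S$ generates the nontrivial group $\Gamma$, it is nonempty; fix some $s\in S$. As $\Gamma$ is torsion-free and $s\neq 0$, the set $L:=\{ks : k\in\Z\}$ is infinite, and since consecutive points differ by $s\in S$, it is a connected subset of the Cayley graph. Thus, if $L$ is entirely white, it is contained in an infinite white cluster, and it suffices to prove $\mathbf{P}(L\subseteq W)>0$. Indeed, the event ``there is at least one infinite white cluster'' is translation-invariant, hence, by ergodicity of $\rho$ (as recalled after Problem~\ref{problem}), has probability $0$ or $1$; a positive lower bound therefore forces it to equal $1$.

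To estimate $\mathbf{P}(L\subseteq W)$, recall that $L\subseteq W$ if and only if $L\cap B_p=\emptyset$ for every prime $p$, and that the cosets $B_p$ are independent across $p$. Hence $\mathbf{P}(L\subseteq W)=\prod_p \mathbf{P}(L\cap B_p=\emptyset)$, and I would compute each factor by working in the quotient $\Gamma/p\Gamma\cong(\Z/p\Z)^d$, writing $\bar s$ for the image of $s$. If $\bar s=0$ --- which happens only for the finitely many primes $p$ with $s\in p\Gamma$ --- then $L\subseteq p\Gamma$, so $L\cap B_p=\emptyset$ exactly when $B_p$ is not the coset $p\Gamma$, giving the factor $1-p^{-d}>0$. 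If $\bar s\neq0$, then, since every nonzero element of $(\Z/p\Z)^d$ has order $p$, the reductions of the points $ks$ sweep out the order-$p$ subgroup $\langle\bar s\rangle$; as the uniform coset $B_p$ meets $L$ precisely when its class lies in $\langle\bar s\rangle$, we obtain $\mathbf{P}(L\cap B_p=\emptyset)=1-p\cdot p^{-d}=1-p^{1-d}$.

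Finally, I would conclude by positivity of the resulting infinite product. The finitely many factors coming from $\bar s=0$ are positive, so the positivity of $\prod_p\mathbf{P}(L\cap B_p=\emptyset)$ reduces to that of $\prod_p(1-p^{1-d})$, which holds if and only if $\sum_p p^{1-d}<\infty$. This is exactly where the hypothesis $d\ge3$ enters: then $p^{1-d}\le p^{-2}$, the series converges, and hence $\mathbf{P}(L\subseteq W)>0$, which completes the argument. I do not expect a genuine obstacle here; the only points demanding care are the per-prime computation $\mathbf{P}(L\cap B_p=\emptyset)=1-p^{1-d}$ and the remark that $d\ge3$ is precisely the threshold making $\sum_p p^{1-d}$ summable. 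This is consistent with the fact that for $d=2$ this single-line strategy breaks down, since the corresponding product vanishes, so that proving existence in that regime requires the finer argument used for Theorem~\ref{thm:usual}.
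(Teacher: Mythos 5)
Your argument is correct and is essentially the paper's: one exhibits a deterministic infinite connected set that is entirely white with positive probability, via an Euler product over primes exploiting the independence of the $(B_p)$, and then concludes by ergodicity. The only (harmless) difference is the choice of set --- you use the line $\mathbb{Z}s$ itself, with per-prime avoidance probability $1-p^{1-d}$ for all but finitely many primes and product essentially $\frac{1}{\zeta(d-1)}$, whereas the paper passes to coordinates adapted to $s$ and uses a $(d-2)$-dimensional sublattice containing that line, with per-prime factor $1-p^{-2}$ and product $\frac{1}{\zeta(2)}$; both computations hinge on $d\ge3$ in the same way.
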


This proposition is proved as a warm-up in Section~\ref{sec:warm-up-general}. Therefore, as far as existence of the infinite white cluster is concerned, only the 2-dimensional case of Theorem~\ref{thm:usual} is substantial.

Regarding uniqueness of the infinite white cluster and inexistence of the infinite black cluster, all dimensions are interesting in Theorem~\ref{thm:usual} and the general case is deduced from the 2-dimensional one. For these properties, what can be done beyond Theorem~\ref{thm:usual}? Section~\ref{sec:gen} is mainly dedicated to answering this question. As some statements there are lengthy and better appreciated after reading Section~\ref{sec:reduction}, we do not include them in the introduction and defer their statements to Section~\ref{sec:gen-statements}. For now, we only gather some concrete consequences of them. In the next theorems, we refer to some interesting lattices by their name: these lattices are introduced in Section~\ref{sec:interestinglattices}.

\begin{theo}[Existence and uniqueness of the infinite white cluster]
    \label{thm:examples}
    Let $\Gamma\subset \mathbb{R}^d$ be any of the following lattices: the triangular lattice in $\mathbb{R}^2$, the $D_d$ lattice in $\mathbb{R}^d$ for $d\ge2$, the $E_8$ lattice in $\mathbb{R}^8$, or the Leech lattice in $\mathbb{R}^{24}$. Let $S$ be the set of elements of $\Gamma\setminus \{0\}$ with minimal norm in the Euclidean space $\mathbb{R}^d$, which indeed is an admissible generating subset of $\Gamma$. Endow $\Gamma$ with the Cayley graph structure given by $S$.

    Then, the number of infinite white clusters is almost surely equal to 1.
\end{theo}

\begin{theo}[Inexistence of the infinite black cluster]
    \label{thm:examples-black}
    Let $\Gamma\subset \mathbb{R}^d$ be either the triangular lattice in $\mathbb{R}^2$ or the $D_d$ lattice in $\mathbb{R}^d$ for $d\ge2$. Let $S$ be the set of elements of $\Gamma\setminus \{0\}$ with minimal norm in the Euclidean space $\mathbb{R}^d$, which indeed is an admissible generating subset of $\Gamma$. Endow $\Gamma$ with the Cayley graph structure given by $S$.

    Then, there is almost surely no infinite black cluster.
\end{theo}

\begin{theo}[Spread-out graphs]
    \label{thm:spread-out}
    Let $d\ge 2$ and let $\Gamma=\mathbb{Z}^d$. Let $p\in[1,\infty]$ and $\alpha\in [1,\infty)$. At last, let $S=\{x\in \mathbb{Z}^d\,:\,0<\|x\|_p\le \alpha\}$ and endow $\Gamma$ with the Cayley graph structure given by $S$.

    Then, the number of infinite white clusters is almost surely equal to 1.
\end{theo}

Observe that, under the assumptions of Theorem~\ref{thm:spread-out}, it may well be that there is at least one infinite black cluster --- by what we have seen on page~\pageref{page:exist-black}, this occurs as soon as $\alpha\ge 2$.


\vspace{0.27cm}

If you wish to learn more about context and motivations around this study, please have a look at the next section. If you want to dive directly into the proof, proceed directly to Section~\ref{sec:reduction}, which reduces Theorem~\ref{thm:usual} to proving an estimate on ``the probability to cross a rectangle''. Section~\ref{sec:goodcrossing}, which mainly lies in the realm of number theory, establishes such an estimate. Then, in Section~\ref{sec:gen}, we extend the scope of the techniques used in Section~\ref{sec:reduction} to cover other $(\Gamma,S)$ of interest.

\section{Motivations and context}
\label{sec:motivation}
Here is the context surrounding the problem studied in this paper.


\subsection{Visible lattice points have several guises}
\label{sec:primi}
Different mathematicians may encounter visible lattice points from different angles. This provides additional interest for the notion, as well as additional terminology that may interfere with bibliographical search.


An element $u$ of $\Gamma$ is said to be \emph{primitive} in $\Gamma$ if it does not belong to any $n \Gamma$ with $n \geq 2$. In particular, an element $u$ of $\Gamma$ is primitive if and only if it is visible from the origin. A primitive element $u$ of $\Gamma$ can always be completed to a $\Z$-basis $(u,v_1, \cdots, v_{d-1})$ of $\Gamma$. For $\Gamma=\mathbb{Z}^d$, this is also equivalent to the entries of $u$ being \emph{coprime}, meaning that $\gcd(u_1,\dots,u_d)=1$.

\subsection{In which sense is $\rho$ the correct object to consider?}\label{sec:convergence}
We now explain why $\rho$ deserves to be considered as ``the distribution of $\Vis_\Gamma(X)$ for $X$ picked uniformly at random in $\Gamma$''. This is for motivation only and is not to be used in this paper.

For every $n\ge 0$, let $F_n:=\{x\in\Gamma\,:\,\|x\|_2\le n\}$ and let $X_n$ be a random point chosen uniformly at random in $F_n$. Endow the set $\mathscr{P}(\Gamma)$ with the smallest topology making $P\longmapsto \mathds{1}_P(v)$ continuous for every $x\in \Gamma$. It has been proved that $\Vis_{X_n}(\Gamma)$ converges in distribution to $\rho$ when $n$ goes to infinity. This means that for every continuous (automatically bounded) map $f:\mathscr{P}(\Gamma)\to \mathbb{R}$, the expectation of $f(\Vis_{X_n}(\Gamma))$ converges to $\int f\,\mathrm{d}\rho$. Equivalently, it means that for every finite subset $F$ of $\Gamma$ and every $\mathcal{A}\subset \mathscr{P}(F)$, the probability $\mathbf{P}(\Vis_{X_n}(\Gamma)\in \mathcal{A})$ converges to $\rho(\{P\in\mathscr{P}(\Gamma):P\cap F\in \mathcal{A}\})$ when $n$ goes to infinity.

This result was established in its second form in \cite{pleasantshuck} when $\Gamma=\mathbb{Z}^d$. The idea there was mainly that every pattern $\mathcal{A}$ admits an asymptotic density. In \cite{martineau}, instead of thinking of one convergence of number for every choice of $\mathcal{A}$, the same problem was considered in the framework of convergence of probability measures. The lattice was again taken to be $\mathbb{Z}^d$ but rather general sequences $(F_n)$ were taken care of. We do not provide detailed statements here and refer the interested reader to \cite{martineau}. As the assumptions of \cite{martineau} on $(F_n)$ are stable under linear automorphisms of $\mathbb{R}^d$, the convergence readily adapts to any lattice $\Gamma$ for rather general sequences $(F_n)$, including that of the previous paragraph.

Let us also point out that $W$ is indeed constructed from a point chosen ``uniformly at random'' --- simply, it is not chosen in $\mathbb{Z}^d$ but elsewhere. Here, by ``uniformly at random'', we mean ``according to the Haar measure of some compact Hausdorff topological group''. There is no group topology making $\mathbb{Z}^d$ compact and Hausdorff but we can realise $\mathbb{Z}^d$ as a dense subgroup in some compact Hausdorff group, for instance $\prod_p (\Z/p\Z)^d$. And indeed, $W$ is directly constructed from $(B_p)$, which is Haar-distributed in $\prod_p (\Z/p\Z)^d$. More details are given in \cite{martineau}, where the compactification $\hat{\Z}=\varprojlim\Z/n\Z$ of $\mathbb Z$ is also used. See also \cite{kubotasugita}, where the authors use such compactifications to revisit the classical result according to which given $d$ independent random integers chosen ``uniformly at random in $\mathbb Z$'', the probability that they are coprime is $\frac{1}{\zeta(d)}$.

\subsection{Why is it relevant to study the percolative properties of the random coprime colouring?}\label{sec:context}
As explained in the previous paragraph, the random variable $W$ is a natural object of study. Many questions can be asked about it, including questions of percolation theory. Doing so is a source of new arithmetic problems, and reveals new phenomena in percolation theory. Indeed, percolation models that are usually studied enjoy some form of ``mixing'': knowing what happens in some region gives little or no information about what happens very far away. For this model, this is the opposite situation.

Let us see an instance of this phenomenon. Let $d\ge 2$ and let $W=\Gamma\setminus\bigcup_p B_p$. If you are given the restriction of $W$ to the half-space $\{x\in\mathbb{Z}^d\,:\,x_1\ge0\}$, then you can almost surely predict $W$ everywhere. Indeed, one easily checks that for every prime $p$, there is a unique coset of $p\mathbb{Z}^d$ that does not intersect $W\cap\{x\in\mathbb{Z}^d\,:\,x_1\ge0\}$ and that this coset is $B_p$. Therefore, by knowing only the restriction of $W$ to the half-space, we can recover all $B_p$'s, hence $W$. 
Let us put the rigidity of this model into a wider perspective.

An important dichotomy in mathematics is the one between structured and disordered systems. It plays a role in diverse fields of mathematics \cite{tao-blog}. Structured systems can be studied fruitfully by using their rigidity, possibly via algebraic methods. As for disordered systems, they connect to probability theory, where other tools are available. It is an important mathematical fact that, in several contexts, any system can be decomposed into structured and disordered parts. Szemer\'edi's Regularity Lemma is such a statement for graphs \cite{szem-lemma}, and the Furstenberg--Zimmer Theorem does so very cleanly for ergodic actions of $\mathbb{Z}$ or $\mathbb{Z}^d$ --- see \cite{zimmer, furstenberg, furstenberg-book}. Once we have extracted all the order there is in a system, it is disordered enough to be amenable to a probabilistic approach. One could say that there is no no man's land between probability and algebra.

This paves the way for a strategy to handle arbitrary systems: deal with structured systems and with disordered ones, and then use the decomposition theorem to carefully reduce the general case to these cases. In additive combinatorics, Szemer\'edi's Theorem, which states that any subset of $\mathbb{N}$ that has a positive upper-density contains arbitrarily long arithmetic progressions, has been proved in this way --- see both \cite{szemeredi} and \cite{furstenberg}.

In the context of the Furstenberg--Zimmer Theorem, the technical word for ``structured'' is ``compact'' (or ``almost periodic''), and that for ``disordered'' is ``weakly mixing''. We shall not need details here. The interested reader may learn about this in \cite{tao-blog, tao-lecture, furstenberg-book} or read a few introductory words in Appendix~\ref{appen}.

Doing percolation on $\mathbb{Z}^d$ means to pick a random subset of $\mathbb{Z}^d$ and study the ensuing clusters. One may pick any $\mathbb{Z}^d$-invariant probability measure on $\mathscr{P}(\mathbb{Z}^d)$. To the best of our knowledge, our investigation is, together with \cite{martineau}, the first one to do so for a probability measure defining a \emph{compact} system.\footnote{It is indeed compact as it is conjugate to a system as in Exercise~\ref{exo} with $K=\prod_p(\mathbb{Z}^d/p\mathbb{Z}^d)$, except that the group acting is $\mathbb{Z}^d$ instead of $\mathbb Z$.} All percolation articles we know of cover either general properties of percolations, or noncompact examples. Still, independently of these Furstenberg--Zimmer motivations, a few non weakly-mixing examples have been studied: see notably~\cite{jonasson, hoffman, pete, brochette, marcelo-vladas, ksv, hsst}. For such percolations, the ``compact part'' of their Furstenberg--Zimmer decomposition is nontrivial. 
In Appendix~\ref{appen:other}, we pick an example and explain without technicalities why it has some compact (a.k.a~almost periodic) content.

As it may be the case that a comprehensive understanding of percolation would have to go through a specific understanding of both the weakly mixing case and the compact one, it seems worthwhile to lead a proper investigation of the compact case. The present paper and \cite{martineau} offer a starting point for such an investigation, by providing a simple archetypal case of study. 

Studying percolative properties of the random coprime colouring is a nice playground for other directions of study as well. First, it offers a smooth interplay with number theory. As for percolation theory, it is one natural model to consider that enjoys none of the following properties: insertion-tolerance, deletion-tolerance, the FKG inequality. See Section~\ref{sec:context-perco} for definitions and a proof. Due to the lack of insertion-tolerance (resp.~deletion-tolerance) for $W$, uniqueness of the infinite white (resp. black) cluster cannot be obtained via the usual Burton--Keane argument \cite{burton-keane}, and requires a specific treatment. This treatment is made possible by the fact that the random coprime colouring enjoys its own kind of nice properties that usual Bernoulli percolation does not have: see Remarks~\ref{rem:quotient} and \ref{rem:long}.

\subsection{Some usual properties that are \emph{not} enjoyed by the random coprime colouring}
\label{sec:context-perco}
Let $\mu$ be a probability measure on $\mathscr{P}(\Gamma)$. 
A subset $\mathcal{A}$ of $\mathscr{P}(\Gamma)$ is said to be \emph{increasing} if, for all $\omega\in \mathcal{A}$ and all $\omega' \supset \omega$, we have $\omega'\in\mathcal{A}$. Less formally, being increasing means that adding new vertices to a configuration can only help to fulfill $\mathcal{A}$.
Say that $\mu$ satisfies the \emph{FKG inequality} if for all increasing measurable $\mathcal{A},\mathcal{B}\subset \mathscr{P}(\Gamma)$, we have $\mu(\mathcal{A}\cap\mathcal{B})\ge \mu(\mathcal{A})\mu(\mathcal{B})$. The acronym FKG stands for Fortuin--Kasteleyn--Ginibre \cite{harris, fkg}.
We also say that $\mu$ satisfies a \emph{weak FKG inequality} if for all increasing measurable $\mathcal{A},\mathcal{B}\subset \mathscr{P}(\Gamma)$ of positive $\mu$-probability, we have $\mu(\mathcal{A}\cap\mathcal{B})>0$.

Say that the measure $\mu$ is \emph{insertion-tolerant} if, for every $v\in\Gamma$, the conditional probability $\mu(v\in \omega\,|\,\omega\setminus\{v\})$ is $\mu$-almost surely positive. Likewise, say that $\mu$ is \emph{deletion-tolerant} if, for every $v\in\Gamma$, the conditional probability $\mu(v\notin \omega\,|\,\omega\setminus\{v\})$ is $\mu$-almost surely positive.
Insertion-tolerance and deletion-tolerance are useful to perform surgeries in finite regions \cite[Chapter~7]{lyons-peres}. For general background on percolation theory, the reader may pick any of \cite{bollobas-riordan, grimmett, lyons-peres, werner} or the lecture notes \cite{duminil}.

\begin{enonce}{Fact}
    Whenever $d\ge 2$, the distribution of $W$ does not satisfy any of the following properties: the FKG inequality (even in the weak form), insertion-tolerance, deletion-tolerance.
\end{enonce}

\begin{proof}
    By iteration, if weak FKG holds for two increasing events, then it also holds for $2^d$ of them. For all $v\in \{0,1\}^d$, let $\mathcal{A}_v:=\{\omega\,:\,v\in \omega\}$, which is increasing and measurable. For each of the four possible $v$, we have $\rho(\mathcal{A}_v)=\frac{1}{\zeta(d)}>0$, as $d\ge 2$. But their intersection has probability zero for $\rho$, as $B_2$ necessarily intersects $\{0,1\}^d$. Therefore, the probability measure $\rho$ does not satisfy the weak FKG inequality. In particular, it does not satisfy FKG.

    As for insertion-tolerance and deletion-tolerance, the reasoning explained at the beginning of Section~\ref{sec:context} explains that knowing $W$ in a half-space almost surely suffices to reconstruct $W$ exactly. In particular, knowing $W$ outside $\{v\}$ almost surely suffices to determine whether $v$ belongs to $W$ or not. Therefore, the conditional probability $\rho(v\in \omega\,|\,\omega\setminus\{v\})$ belongs $\rho$-almost surely to $\{0,1\}$. This is incompatible with both insertion and deletion-tolerance, as $\rho(\{\omega\,:\,v\in\omega\})=\frac{1}{\zeta(d)}\in (0,1)$.
\end{proof}

\section{Reduction to good crossing of rectangles}
\label{sec:reduction}

The purpose of this section is to establish Theorem~\ref{thm:usual}, provided some estimates hold. Before doing so, let us warm up by proving the easy part of Theorem~\ref{thm:usual}, namely the fact that there is at least one infinite white cluster when $d\ge3$.

\subsection{Existence of an infinite white cluster for $d\ge 3$}\label{sec:warm-up}
Let us work in dimension $d\ge 3$ and endow $\mathbb{Z}^d$ with its usual graph structure. It suffices to prove that, with positive probability, the infinite set $\mathbb\{0\}^2\times\mathbb{Z}^{d-2}$ contains no black element.
This event can be rephrased as ``for every prime $p$, the set $B_p$ does not interesect $\mathbb\{0\}^2\times\mathbb{Z}^{d-2}$''. For every $p$, exactly $p^d-p^{d-2}$ cosets are allowed for $B_p$ so, by independence, the probability of the event is $\prod_p\frac{p^d-p^{d-2}}{p^d}=\prod_p\left(1-p^{-2}\right)$, which is equal to $\frac{1}{\zeta(2)}$ by Eulerian product. We conclude by recalling that, as $2>1$, we have $\zeta(2)<\infty$ hence $\frac{1}{\zeta(2)}>0$.

\begin{rema}
    The easy proof above illustrates well how different this process is from Bernoulli site percolation, which corresponds to each element being white with probability $\alpha\in(0,1)$, black otherwise, independently of one another. In such a setup, the probability that a given set $A$ is fully white is $\alpha^{|A|}$. In particular, given any deterministic infinite set, the probability it is fully white is zero. This contrasts with what we have observed for our model with $A=\mathbb\{0\}^2\times\mathbb{Z}^{d-2}$.
\end{rema}

Reasoning as in the proof above with $d=2$ and the infinite set $\{0\}\times\mathbb{Z}$, we see that the probability that $\{0\}\times\mathbb{Z}$ contains no black element is $\frac{1}{\zeta(1)}=\frac{1}{\infty}=0$. Because of this, the 2-dimensional case will not be trivial to handle: instead of looking for infinite horizontal or vertical white lines, we shall look for \emph{long} white lines.

\subsection{The 2-dimensional case}\label{sec:2-dim} In this section, we assume that $d=2$. Our purpose is to prove that, under this additional assumption, the conclusion of Theorem~\ref{thm:usual} holds. The general case will then be deduced from this particular case.

\newcommand{\horizon}[6]{%
\begin{tikzpicture}%
    \draw[red] (0,0.1)--(0.45,0.1);%
    \draw[color=darkgray!50!gray, very thin] (0,0.04)--(0,0.16);%
    \draw[color=darkgray!50!gray, very thin] (0.45,0.04)--(0.45,0.16);%
    \node[red] at (#4,0.1) {$\scriptstyle #3$};%
    \node[color=darkgray!50!gray] at (0,#2) {$\scriptstyle #1$};%
    \node[color=darkgray!50!gray] at (0.45,#6) {$\scriptstyle #5$};%
\end{tikzpicture}%
}

\newcommand{\vertica}[6]{%
\begin{tikzpicture}%
    \draw[red] (0.1,0)--(0.1,0.38);%
    \draw[color=darkgray!61!gray] (0.04,0)--(0.16,0);%
    \draw[color=darkgray!61!gray] (0.04,0.38)--(0.16,0.38);%
    \node[red] at (0.1,#4) {$\scriptstyle #3$};%
    \node[color=darkgray!61!gray] at (#2,0.38) {$\scriptstyle #1$};%
    \node[color=darkgray!61!gray] at (#6,0) {$\scriptstyle #5$};%
\end{tikzpicture}%
}

\newcommand{\lengthrec}{0.5}
\newcommand{\heightrec}{0.32}
\newcommand{\halfheight}{0.16}

\newcommand{\horizontal}[8]{%
\begin{tikzpicture}%
    \draw[red] (0,\halfheight)--(\lengthrec,\halfheight);%
    \draw[color=darkgray!50!gray, very thin] (0,0)--(\lengthrec,0);%
    \draw[color=darkgray!50!gray, very thin] (0,\heightrec)--(\lengthrec,\heightrec);%
    \draw[color=darkgray!50!gray, very thin] (0,0)--(0,\heightrec);%
    \draw[color=darkgray!50!gray, very thin] (\lengthrec,0)--(\lengthrec,\heightrec);%
    \node[color=darkgray!50!gray] at (0,#2) {$\scriptstyle #1$};%
    \node[color=darkgray!50!gray] at (\lengthrec,#4) {$\scriptstyle #3$};%
    \node[color=darkgray!50!gray] at (#6,0) {$\scriptstyle #5$};%
    \node[color=darkgray!50!gray] at (#8,\heightrec) {$\scriptstyle #7$};%
\end{tikzpicture}%
}

\newcommand{\lengthvert}{0.32}
\newcommand{\halflength}{0.16}
\newcommand{\heightvert}{0.5}

\newcommand{\vertical}[8]{%
\begin{tikzpicture}%
    \draw[red] (\halflength,0)--(\halflength,\heightvert);%
    \draw[color=darkgray!50!gray, very thin] (0,0)--(\lengthvert,0);%
    \draw[color=darkgray!50!gray, very thin] (0,\heightvert)--(\lengthvert,\heightvert);%
    \draw[color=darkgray!50!gray, very thin] (0,0)--(0,\heightvert);%
    \draw[color=darkgray!50!gray, very thin] (\lengthvert,0)--(\lengthvert,\heightvert);%
    \node[color=darkgray!50!gray] at (0,#2) {$\scriptstyle #1$};%
    \node[color=darkgray!50!gray] at (\lengthvert,#4) {$\scriptstyle #3$};%
    \node[color=darkgray!50!gray] at (#6,0) {$\scriptstyle #5$};%
    \node[color=darkgray!50!gray] at (#8,\heightvert) {$\scriptstyle #7$};%
\end{tikzpicture}%
}

\newcommand{\annulus}[1]{%
\begin{tikzpicture}%
    \draw[red] (-0.24,-0.16)--(0.24,-0.16);%
    \draw[red] (-0.16,-0.24)--(-0.16,0.24);%
    \draw[red] (-0.24,0.16)--(0.24,0.16);%
    \draw[red] (0.16,-0.24)--(0.16,0.24);%
    \draw[color=darkgray!50!gray, very thin] (-0.24,-0.24)--(-0.24,0.24)--(0.24,0.24)--(0.24,-0.24)--(-0.24,-0.24);%
    \draw[color=darkgray!50!gray, very thin] (-0.08,-0.08)--(-0.08,0.08)--(0.08,0.08)--(0.08,-0.08)--(-0.08,-0.08);%
    \node[color=darkgray!50!gray] at (0.48,0.13) {$\scriptstyle#1$};%
\end{tikzpicture}%
}

In Figure~\ref{fig:simu}, we can see horizontal and vertical white lines fully crossing the figure. As we shall see, this is no accident. Let us introduce some definitions to make use of this observation.
In the forthcoming definitions, the elements denoted by $i$, $i_1$, $i_2$, $j$, $j_1$ and $j_2$ are always taken in $\mathbb{Z}$. We use the colour red to indicate white lines: using white would be misleading given that a blank page is already white.

Given $j$, $i_1$ and $i_2$ such that $i_1\le i_2$, we write $\raisebox{-0.25cm}{\horizon{i_1}{-0.16}{j}{-0.18}{i_2}{-0.16}}$ for the event ``all elements of $\llbracket i_1,i_2\rrbracket\times\{j\}$ are white''.
For $i$, $j_1$ and $j_2$ satisfying $j_1\le j_2$, let $\raisebox{-0.37cm}{\vertica{j_2}{-0.12}{i}{-0.16}{j_1}{-0.12}}$ be the event ``all elements of $\{i\}\times\llbracket j_1,j_2\rrbracket$ are white''.

Given $i_1$, $i_2$, $j_1$ and $j_2$ satisfying $i_1\le i_2$ and $j_1\le j_2$, let us introduce two events, corresponding a rectangle being crossed by a horizontal (resp. vertical) white line: let $\raisebox{-0.40cm}{\horizontal{~i_1}{-0.18}{i_2}{-0.18}{j_1}{-0.18}{j_2}{-0.18}}$ be the event ``there is $j\in\llbracket j_1,j_2\rrbracket$ such that $\raisebox{-0.25cm}{\horizon{i_1}{-0.16}{j}{-0.18}{i_2}{-0.16}}$'', and let $\raisebox{-0.43cm}{\vertical{~i_1}{-0.18}{~i_2}{-0.18}{j_1}{-0.18}{j_2}{-0.18}}$ be the event ``there is $i\in\llbracket i_1,i_2\rrbracket$ such that $\raisebox{-0.37cm}{\vertica{j_2}{-0.12}{i}{-0.16}{j_1}{-0.12}}$''. These events play a prominent role in this paper. We may write them respectively $\raisebox{-0.40cm}{\horizontal{~i_1}{0.5}{i_2}{-0.18}{j_1}{-0.18}{j_2}{-0.18}}$ or $\raisebox{-0.43cm}{\vertical{~i_1}{0.68}{~i_2}{-0.18}{j_1}{-0.18}{j_2}{-0.18}}$ to prevent typographical collisions between the expressions for $i_1$ and $i_2$.

By invariance of our probability distribution under rotation by $\frac\pi 2$, the probability of $\raisebox{-0.40cm}{\horizontal{~i_1}{-0.18}{i_2}{-0.18}{j_1}{-0.18}{j_2}{-0.18}}$ equals that of $\raisebox{-0.45cm}{\vertical{~j_1}{-0.18}{~j_2}{-0.18}{i_1}{-0.18}{i_2}{-0.18}}$. Because of this, understanding the probability of $\raisebox{-0.40cm}{\horizontal{~i_1}{-0.18}{i_2}{-0.18}{j_1}{-0.18}{j_2}{-0.18}}$ will suffice. Observe that, by translation-invariance of our probability distribution, the probability of $\raisebox{-0.40cm}{\horizontal{~i_1}{-0.18}{i_2}{-0.18}{j_1}{-0.18}{j_2}{-0.18}}$  depends only on the width $w=i_2-i_1$ and the height $h=j_2-j_1$, and we denote it by $P(h,w)$. Having the hope that $P(h,w)$\label{intro:p} gets close to 1, it can be useful to introduce $R(h,w):=1-P(h,w)$.

Let us momentarily assume the following estimate and deduce that there is almost surely at least one infinite white cluster. This estimate will be established in Section~\ref{sec:goodcrossing}.

\begin{enonce}{Claim}
    \label{claim:sum}
    The quantity $\sum_{n=0}^\infty R(2^n,2^{n+1})$ is finite.
\end{enonce}

\begin{proof}[Proof of the existence of an infinite white cluster in the square lattice, assuming Claim~\ref{claim:sum}.]
    For every $n\ge0$, let $A_{n}$ denote the event $\raisebox{-0.40cm}{\horizontal{0}{-0.18}{~2^{n+1}}{-0.18}{0~}{-0.18}{2^n~}{-0.18}}$ if $n$ is even and  $\raisebox{-0.43cm}{\vertical{0}{-0.18}{2^n}{-0.18}{0~}{-0.18}{2^{n+1}~~\phantom{a}}{-0.18}}$ if $n$ is odd. Observe that for every $n$, the probability of $A_n$ is $P(2^n,2^{n+1})$. Thanks to Claim~\ref{claim:sum}, the Borel--Cantelli Lemma can be applied to the complements of the $A_n$, yielding that, almost surely, for all $n$ large enough, the event $A_n$ holds. Therefore, it is almost surely possible to build an infinite white staircase as in Figure~\ref{fig:staircase}, thus concluding the proof as only finitely many stairs are missing.
\end{proof}

\begin{figure}[h!!]
\centering
    \includegraphics[width=10cm]{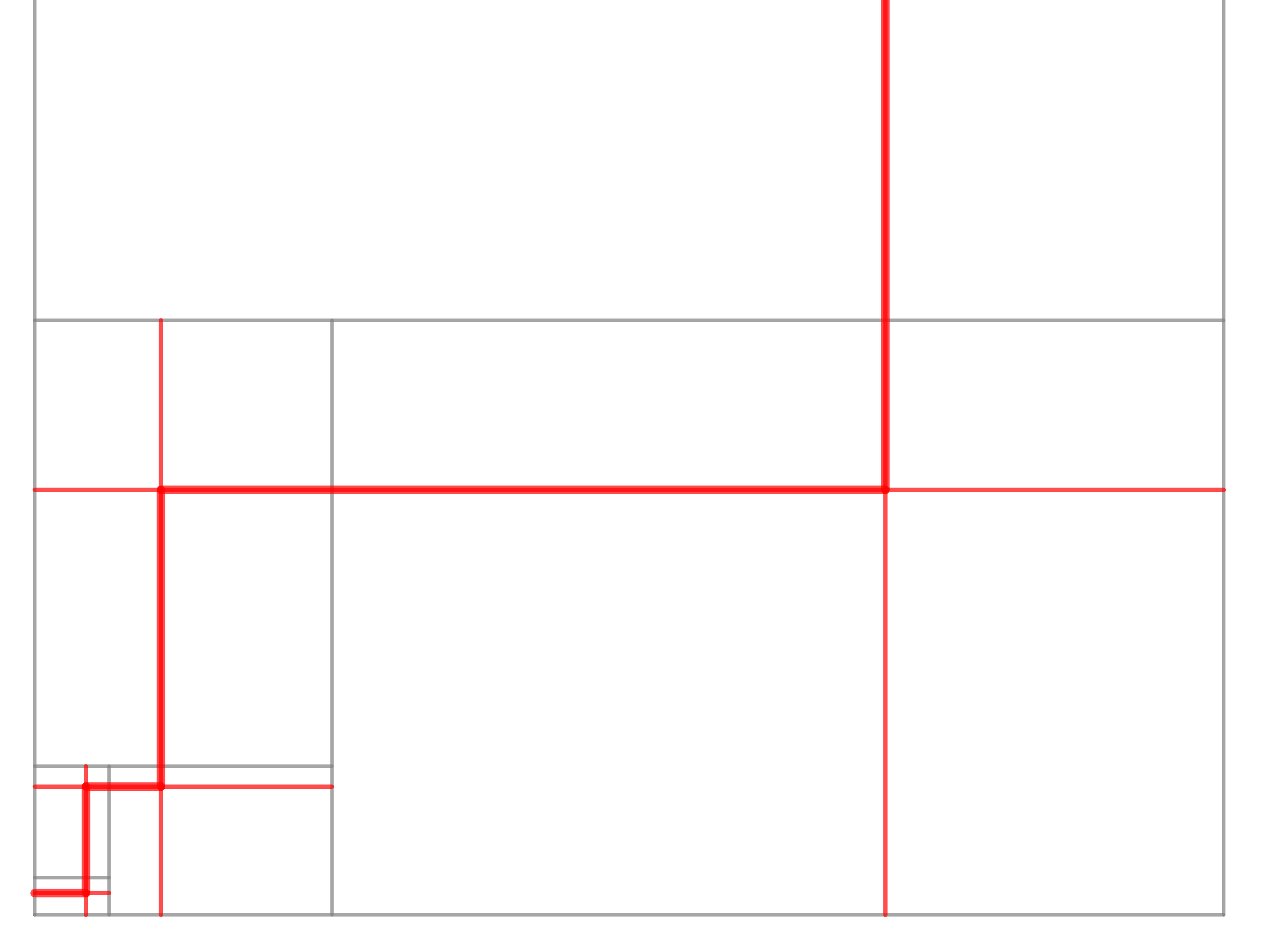}
    \caption{A depiction of the staircase argument, which produces an infinite path represented by a bold line.}
    \label{fig:staircase}
\end{figure}

\begin{rema}
    Note that, in the previous proof, we do not use the FKG inequality, only a simple Borel--Cantelli Lemma. As explained in Section~\ref{sec:context}, the process studied in the present paper does \emph{not} satisfy the FKG inequality.
\end{rema}

In order to solve the case of the square lattice, it remains to prove that, almost surely, there is no infinite black cluster and at most one infinite white cluster. To do so, we rely on the following claim, the proof of which is deferred to Section~\ref{sec:goodcrossing}.

\begin{enonce}{Claim}
    \label{claim:conv}
    The quantity $P(2\cdot3^n,2\cdot3^{n+1})$ converges to 1 when $n$ goes to infinity.
\end{enonce}

\begin{proof}[Proof of the inexistence of an infinite black cluster and the uniqueness of the infinite white cluster in the square lattice, assuming Claim~\ref{claim:conv}.]
    Let $n\ge1$ and let $k=3^n$. Denote by $\annulus{k}$ the event depicted on Figure~\ref{fig:annulus}. More precisely, we introduce
   \begin{equation}
   \label{eq:defannulusevent}
   \annulus{k~}:= \raisebox{-0.48cm}{\vertical{-k}{0.68}{-\frac k 3}{-0.18}{-k}{-0.28}{k}{-0.28}} \cap 
    \raisebox{-0.40cm}{\horizontal{-k}{-0.18}{k}{-0.18}{\frac k 3}{-0.27}{k}{-0.27}}
    \cap
   \raisebox{-0.40cm}{\horizontal{-k}{-0.18}{k}{-0.18}{-k}{-0.30}{-\frac k 3}{-0.30}}
   \cap \raisebox{-0.40cm}{\vertical{\frac k3}{0.76}{k}{-0.18}{-k}{-0.28}{k}{-0.28}}.
   \end{equation}
   Passing to the complement and using the union bound, we get
   \[\mathbb{P}(~~\raisebox{-0.1cm}{\annulus{3^n}}\hspace{-0.1cm})\ge 1- 4R(2\cdot3^{n-1},2\cdot3^{n}).\]
   By Claim~\ref{claim:conv}, the probability of $~~\annulus{3^n}\hspace{-0.2cm}$ converges to 1 as $n$ goes to infinity.

\begin{figure}[h!!]
\centering
    \includegraphics[width=10.1cm]{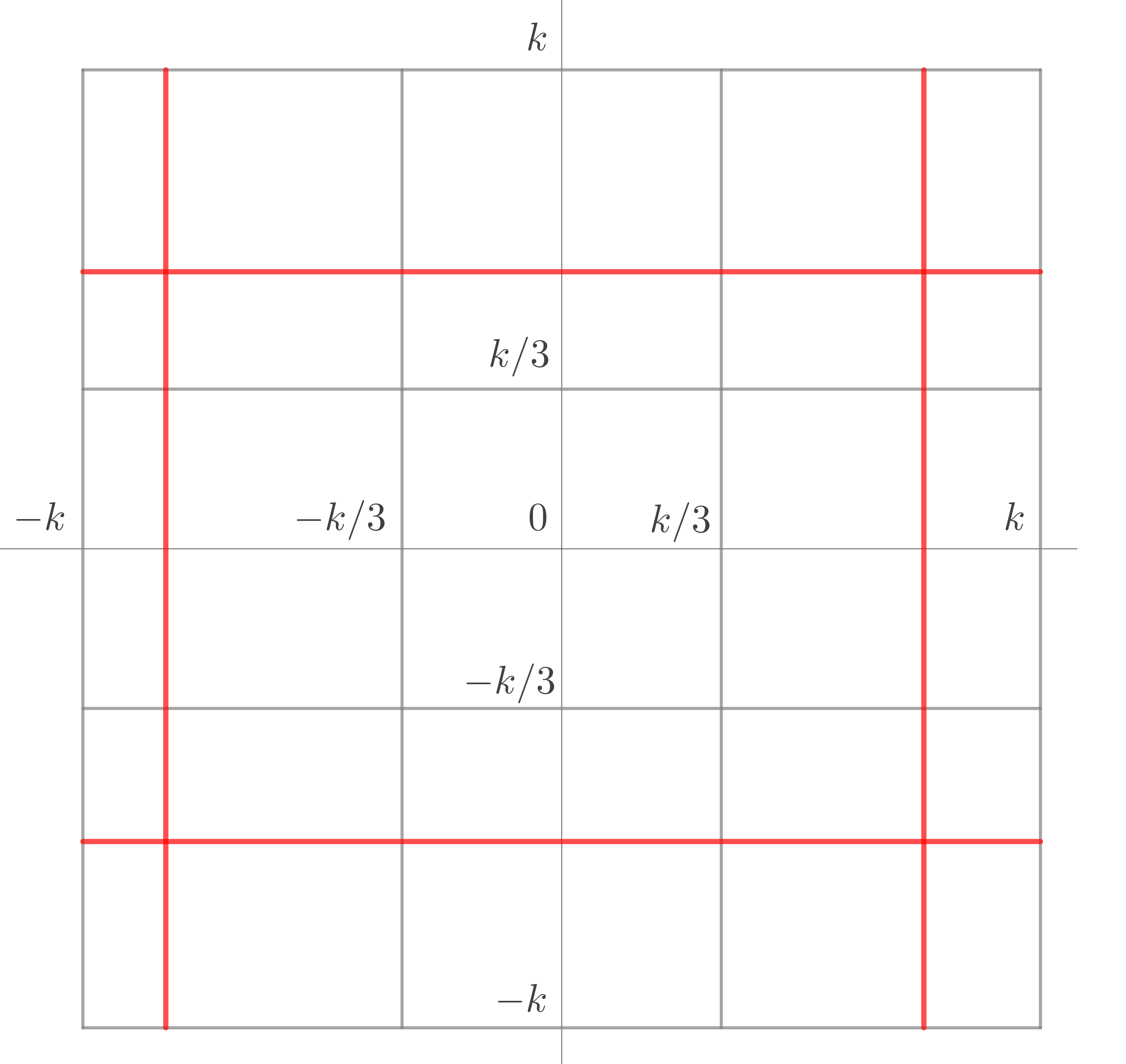}
    \caption{A visualisation of the event defined in \eqref{eq:defannulusevent}}
    \label{fig:annulus}
\end{figure}

    For $m\ge0$, let $E_m$ be the event defined as ``no infinite black cluster intersects $\llbracket -m,m\rrbracket^2$ and at most one infinite white cluster intersects $\llbracket -m,m\rrbracket^2$''.
   To conclude, it suffices to prove that, for every $m\ge0$, the event $E_m$ has probability 1. Let $m\ge 0$. Let $\eta<1$ and let us prove that $\mathbb{P}(E_m)\ge \eta$. As $\mathbb{P}(~~\raisebox{-0.1cm}{\annulus{3^n}}\hspace{-0.1cm})$ converges to 1, we can pick $n$ such that $\mathbb{P}(~~\raisebox{-0.1cm}{\annulus{3^n}}\hspace{-0.1cm})\ge\eta$ and $3^{n-1}> m$. Let us explain why $~~\annulus{3^n}\hspace{-0.2cm}\subset E_m$.

   \begin{figure}[h!!]
\centering
    \includegraphics[width=9cm]{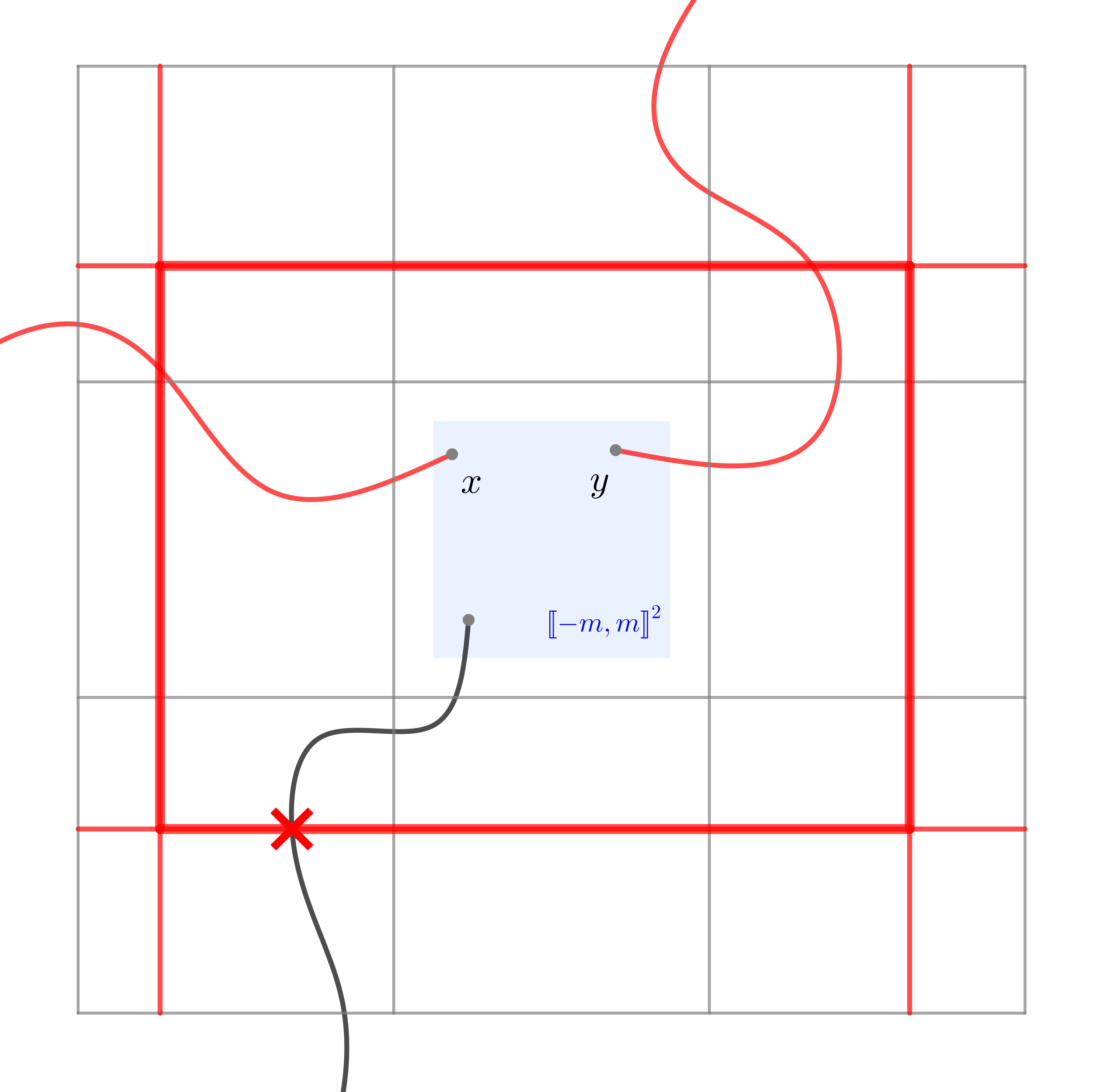}
    \caption{Given the white rectangle (depicted in red), it is impossible that an infinite black cluster intersects the blue zone. Likewise, any two infinite white clusters intersecting the blue area must in fact be the same cluster.}
    \label{fig:annubis}
\end{figure}
   
   When $~~\annulus{3^n}\hspace{-0.2cm}$ holds, there is a white rectangle surrounding $\{x\in\mathbb{Z}^2\,:\,\|x\|_\infty <3^{n-1}\}$: see in bold on Figure~\ref{fig:annubis}. As $3^{n-1}>m$, this rectangle surrounds $\llbracket-m,m\rrbracket^2$. Any infinite path starting in $\llbracket-m,m\rrbracket^2$ must intersect this rectangle and thus contain a white vertex. Therefore, when $~~\annulus{3^n}\hspace{-0.2cm}$ holds, no infinite black cluster can intersect $\llbracket-m,m\rrbracket^2$. Likewise, on this event, the following reasoning holds: given $x$ and $y$ in $\llbracket-m,m\rrbracket^2$, if they both belong to infinite white clusters, then both of them are connected to our rectangle by white paths, and as our rectangle is itself white and a connected set, $x$ and $y$ must belong to the same white cluster. Therefore, the inclusion $~~\annulus{3^n}\hspace{-0.2cm}\subset E_m$ holds, thus yielding $\mathbb{P}(E_m)\ge\mathbb{P}(~~\raisebox{-0.1cm}{\annulus{3^n}}\hspace{-0.1cm})\ge \eta$, as desired.
\end{proof}

\begin{rema}
    In the proof above, it suffices in the claim that the limsup is equal to 1 --- or even positive if one uses ergodicity. 
\end{rema}

\subsection{The general case}\label{sec:general} If we take Claims~\ref{claim:sum} and \ref{claim:conv} for granted, given Sections~\ref{sec:warm-up} and \ref{sec:2-dim}, the remaining task goes as follows: showing that, for the usual graph structure on $\mathbb{Z}^d$ and assuming $d\ge2$, there is almost surely no infinite black cluster and at most one infinite white cluster.

Let thus $d\ge2$ and $\mathbb{Z}^d$ be endowed with its usual graph structure. Let $i\in\llbracket2,d\rrbracket$ and let $\pi_{1i}:\mathbb{Z}^d\to\mathbb{Z}^2$ be defined by $x\longmapsto (x_1,x_i)$. For every prime $p$, let $B_p^{(1i)}:=\pi_{1i}(B_p)$. Then each $B_p^{(1i)}$ is a random coset of $p\mathbb{Z}^2$ chosen uniformly at random, and they are independent when $p$ varies.
Therefore, because of Section~\ref{sec:2-dim} and given some arbitrarily fixed $m\ge0$, we can almost surely pick a rectangle $\mathcal R_{1i}$ surrounding $\llbracket -m,m\rrbracket^2$ such that $\pi_{1i}(\bigcup_p B_p)$ does not intersect $\mathcal R_{1i}$. The set $\pi^{-1}(\mathcal R_{1i})$ contains only white vertices.

We now let $i$ vary. The set $\bigcup_{i=2}^d\pi^{-1}(\mathcal R_{1i})$ has the following three properties:
\begin{itemize}
    \item it contains only white vertices,
    \item any path from $\llbracket -m,m\rrbracket^d$ to infinity must intersect it,
    \item it is connected.
\end{itemize}
The second property holds because if a path $\kappa$ starting in $\llbracket -m,m\rrbracket^d$ never intersects $\bigcup_{i=2}^d\pi^{-1}(\mathcal R_{1i})$, then for every $i\ge2$, the sequence $(\kappa_1(n),\kappa_i(n))_{n\ge0}$ stays in the bounded component of the complement of $\mathcal R_{1,i}$, so that all components of $\kappa$ remain bounded.

The fact that $\bigcup_{i=2}^d\pi^{-1}(\mathcal R_{1i})$ has these three properties ends the proof exactly as in the end of Section~\ref{sec:2-dim}. No black path can go from $\llbracket-m,m\rrbracket^d$ to infinity as it would have to intersect the white set $\bigcup_{i=2}^d\pi^{-1}(\mathcal R_{1i})$. Besides, given any two infinite white paths starting in $\llbracket-m,m\rrbracket^d$, they must both intersect $\bigcup_{i=2}^d\pi^{-1}(\mathcal R_{1i})$ and we can then join them via a white path, as $\bigcup_{i=2}^d\pi^{-1}(\mathcal R_{1i})$ is connected and fully white. \qed

\vspace{0.3cm}

\begin{rema}
    Imagine that we could prove the claims for ``fat lines''. Namely, assume that for every $k\ge 0$, the claims hold if the event $\raisebox{-0.40cm}{\horizontal{~i_1}{-0.18}{i_2}{-0.18}{j_1}{-0.18}{j_2}{-0.18}}$ is taken to mean ``there  is some $j\in \llbracket j_1,j_2-k\rrbracket$ such that $\llbracket i_1,i_2\rrbracket\times \llbracket j,j+k\rrbracket$ contains only white vertices''. Then, the arguments of the present section would adapt and prove that, when $d\ge 2$, for all choices of $(\Gamma,S)$, there are almost surely a unique infinite white cluster and no infinite black cluster. This plan cannot work, though --- recall the counterexamples from page~\pageref{page:exist-black}. Actually, given $i_1<i_2$ and $j_1<j_2$, it is easy to see that the ``fat event'' has probability zero as soon as $k>0$. Indeed, the desired ``fat line'' would then have to contain a 2 by 2 square, which cannot be fully white because of $B_2$.
\end{rema}

\begin{rema}
    \label{rem:quotient}
    We have used that the image by $x\longmapsto (x_1,x_2)$ of the black vertices of the $d$-dimensional random coprime colouring has the same probability distribution as the black vertices of the 2-dimensional random coprime colouring. This connection between the same model considered on a graph and on a quotient of it is simpler than what happens for the usual independent Bernoulli percolation --- see \cite[proof of Theorem~1]{perco-beyond} and \cite{stochalift}.
\end{rema}

For Theorem~\ref{thm:usual} to be proven, it remains to establish the claims. The next section is devoted to this purpose.

\section{Rectangles are crossed with high probability}
\label{sec:goodcrossing}

In this section, we want to get useful estimates on the quantity $P(h,w):=\mathbb{P}(\raisebox{-0.40cm}{\horizontal{0}{-0.18}{w}{-0.21}{0}{-0.18}{h}{-0.18}})$ introduced on page~\pageref{intro:p}. As we shift from using probabilistically this quantity to studying it arithmetically, we change our notation.

Given positive integers $n$ and $x$, we consider $\mathbb{P}(\raisebox{-0.40cm}{\horizontal{1}{-0.18}{x}{-0.21}{1}{-0.18}{n}{-0.18}})=P(n-1,x-1)$. Shifting by 1 is useful so that studying $(n,x)$ indeed corresponds to $n$ rows and $x$ columns. As we are interested in how close $\mathbb{P}(\raisebox{-0.40cm}{\horizontal{1}{-0.18}{x}{-0.21}{1}{-0.18}{n}{-0.18}})$ is to 1, we introduce 
\[
r_n(x):=1-\mathbb{P}(\raisebox{-0.40cm}{\horizontal{1}{-0.18}{x}{-0.21}{1}{-0.18}{n}{-0.18}})=R(n-1,x-1).
\]

The goal of this section is to prove the following proposition. Claims~\ref{claim:sum} and \ref{claim:conv} are immediate corollaries of it. We use the notation $\mathcal O(\text{something})$ to mean ``some quantity the absolute value of which is upper bounded by a universal constant times $|\text{something}|$''.

\begin{prop}
    \label{prop:arith}
    For every $n>1$ and every $x\ge n$, we have $r_n(x) = \mathcal{O} \left( \tfrac{\log(x)}{n} \right)$.
\end{prop}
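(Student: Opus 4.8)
My plan is to estimate $r_n(x)$ by the second moment method applied to
\[
N:=\#\{\,j\in\llbracket 1,n\rrbracket : \text{the row }\llbracket 1,x\rrbracket\times\{j\}\text{ is entirely white}\,\},
\]
so that $r_n(x)=\mathbb{P}(N=0)\le \mathrm{Var}(N)/\mathbb{E}[N]^2$ by Chebyshev's inequality. One may freely assume $n\ge C\log x$ for a large absolute constant $C$, the bound being trivial otherwise. For the first moment, a fixed row of length $x$ meets $B_p$ exactly when $B_p$ is one of the $\min(p,x)$ cosets of $p\mathbb{Z}^2$ that the row intersects, so the probability $P_1$ that it is entirely white equals $\prod_{p\le x}(1-\tfrac1p)\prod_{p>x}(1-\tfrac{x}{p^2})$. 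By Mertens' theorem the first product is $\asymp 1/\log x$ and the second lies between two positive constants, whence $\mathbb{E}[N]=nP_1\asymp n/\log x$ and $1/\mathbb{E}[N]\asymp \log(x)/n$. Thus the proposition reduces to the single estimate $\mathrm{Var}(N)=\mathcal{O}(\mathbb{E}[N])$.

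For the second moment, the same coset count applied to two rows at distance $\delta=|j-j'|$ gives, prime by prime, a joint white-probability $1-q_p$ when $p\mid\delta$ and $1-2q_p$ when $p\nmid\delta$, where $q_p=\min(p,x)/p^2$ — the coincidence $p\mid\delta$ being exactly when the two rows hit the same cosets of $p\mathbb{Z}^2$. Writing
\[
r(\delta):=\frac{\mathbb{P}(\text{rows }j,j'\text{ both white})}{P_1^2}=\prod_{p\mid\delta}\frac{1}{1-q_p}\prod_{p\nmid\delta}\frac{1-2q_p}{(1-q_p)^2},
\]
one finds $\mathrm{Var}(N)=nP_1(1-P_1)+2P_1^2\sum_{\delta=1}^{n-1}(n-\delta)\big(r(\delta)-1\big)$. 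The first term is $\mathcal{O}(\mathbb{E}[N])$. Using the identity $\sum_{\delta=1}^{n-1}(n-\delta)(r(\delta)-1)=\sum_{D=1}^{n-1}S(D)$ with $S(D):=\sum_{\delta=1}^{D}(r(\delta)-1)$, and since $P_1^2\asymp 1/\log^2 x$, it suffices to prove $S(D)=\mathcal{O}(\log x)$ uniformly for $D\le n$; this yields $2P_1^2\sum_\delta\cdots=\mathcal{O}(P_1^2\,n\log x)=\mathcal{O}(n/\log x)=\mathcal{O}(\mathbb{E}[N])$, as required.

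The estimate on $S(D)$ is the arithmetic heart of the argument, and the step I expect to be the real obstacle. Up to a harmless factor $1+\mathcal{O}(1/(x\log x))$ coming from the primes $p>x$ and from truncating the products at $x$, the quantity $r(\delta)$ equals the multiplicative function $r_\infty(\delta)=\prod_{p\mid\delta}\frac{1}{1-1/p}\prod_{p\nmid\delta}\frac{1-2/p}{(1-1/p)^2}$. Two features are decisive. First, $r_\infty(\delta)=0$ for every odd $\delta$, because the factor at $p=2$ is $\frac{1-2/2}{(1-1/2)^2}=0$; probabilistically this says that two rows of opposite parity can never both be white, since $B_2$ always occupies one full parity class. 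Second, for $\delta=2^a m$ with $m$ odd one has $r_\infty(\delta)=2A'_\infty\,g(m)$, where $A'_\infty=\prod_{p\ge3}\big(1-\tfrac{1}{(p-1)^2}\big)$ and $g(m)=\prod_{p\mid m}\tfrac{p-1}{p-2}$ is supported on odd primes. The crucial point is the elementary identity $\big(1-\tfrac{1}{(p-1)^2}\big)\big(1+\tfrac{1}{p(p-2)}\big)=1$, which makes the Cesàro mean of $r_\infty$ \emph{exactly} equal to $1$, so that the leading term of $\sum_{\delta\le D}r_\infty(\delta)$ is precisely $D$. It is this exact cancellation — the anticorrelation forced by $p=2$ balancing the positive correlations carried by the odd primes — that allows the plain second moment method to succeed here; without it one would only obtain a bound bounded away from $0$.

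It remains to control the error term. Factoring the Dirichlet series as $\sum_\delta r_\infty(\delta)\,\delta^{-s}=\zeta(s)\,E(s)$, one finds that $E$ is an Euler product convergent for $\Re s>0$ with $E(1)=1$, and that the associated coefficients $e$, defined by $r_\infty=\mathbf{1}\ast e$, are supported on integers $2d'$ with $d'$ odd and squarefree, with $|e(2d')|\asymp \prod_{p\mid d'}\tfrac{1}{p-2}$. The elementary bound $\sum_{d'\le M}\prod_{p\mid d'}\tfrac{1}{p-2}=\mathcal{O}(\log M)$ (obtained from $\prod_{p\le M}(1+\tfrac{1}{p-2})\asymp\log M$) then gives both $\sum_{d\le D}|e(d)|=\mathcal{O}(\log D)$ and $D\sum_{d>D}|e(d)|/d=\mathcal{O}(\log D)$, so that $\sum_{\delta\le D}r_\infty(\delta)=E(1)D+\mathcal{O}(\log D)=D+\mathcal{O}(\log D)$, i.e. $S(D)=\mathcal{O}(\log D)=\mathcal{O}(\log x)$ for $D\le n\le x$. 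Feeding this back through the previous paragraph yields $\mathrm{Var}(N)=\mathcal{O}(\mathbb{E}[N])$ and hence $r_n(x)=\mathcal{O}(1/\mathbb{E}[N])=\mathcal{O}(\log(x)/n)$. The delicate parts to carry out rigorously are the exact mean-value identity and the careful bookkeeping of the $2$-adic valuation and of the truncation at $x$ inside the error term.
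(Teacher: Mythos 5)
Your proposal is correct and follows essentially the same route as the paper: a second moment method on the count of white rows, the computation of the joint probability prime by prime with $p=2$ forcing anticorrelation, the exact identity $C_\infty\sum_k\phi(k)/k=1$ (your $E(1)=1$) providing the cancellation of the main term, and the bound $\sum_{k\le N}\prod_{p\mid k}\tfrac1{p-2}=\mathcal O(\log N)$ controlling the error. The only differences are presentational — you package the off-diagonal sum as a Dirichlet convolution $r_\infty=\mathbf 1\ast e$ with partial summation, where the paper directly interchanges the order of summation in $\sum_d(n-d)\theta(d)$ — and both yield the same $\mathcal O(\log x/n)$ bound.
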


\begin{rema}
    \label{rem:long}
    Such a proposition would not hold at all if the black-and-white colouring was defined by Bernoulli site percolation, meaning if all vertices were coloured independently with some fixed probability $p\in(0,1)$ of being white. Indeed, the probability of $\raisebox{-0.40cm}{\horizontal{1}{-0.18}{x}{-0.21}{1}{-0.18}{n}{-0.18}}$ would then be equal to $1-(1-p^x)^n$ and upperbounded by $np^x$. The corresponding $r_n(x)$ would thus converge to 1 instead of 0 in the regime where $n$ goes to infinity and $x$ is proportional to $n$.
\end{rema}

We will prove Proposition~\ref{prop:arith} by a second moment method, for which we setup some notation. For every $k\in\mathbb{Z}$, let $X_{k,x}$ be the indicator function of the event $\raisebox{-0.25cm}{\horizon{1}{-0.16}{k}{-0.18}{x}{-0.16}}$. We also introduce $Z_{n,x}:=\sum_{k=1}^n X_{k,x}$ the random variable counting how many white horizontal lines cross the rectangle associated with $(n,x)$. Observe that the event $\raisebox{-0.40cm}{\horizontal{1}{-0.18}{x}{-0.21}{1}{-0.18}{n}{-0.18}}$ can be rephrased as ``$Z_{n,x}>0$''.
\newcommand{\E}{\mathbb{E}}

The Chebyshev inequality yields
\[
r_n(x) = \mathbb{P}(Z_{n,x}=0) \leq \frac{\mathrm{Var}(Z_{n,x})}{\mathbb{E}(Z_{n,x})^2} = \frac{\mathbb{E}(Z_{n,x}^2)}{\mathbb{E}(Z_{n,x})^2} - 1.
\]
We have $\E(Z_{n,x})=n\E(X_{1,x})$ and $\E(Z_{n,x}^2)=\sum_{1\le k,\ell\le n}\E(X_{k,x}X_{\ell,x})$, whence
\begin{equation}
\label{eq:cheby}
r_n(x)   \leq  \tfrac{1}{n^2} \sum_{1 \leq k,\ell \leq n} \frac{\E(X_{k,x}X_{\ell,x})}{\E(X_{1,x})^2} - 1.
\end{equation}
Our task is to estimate on average the quantity $\tfrac{\E(X_{k,x}X_{\ell,x})}{\E(X_{1,x})^2}$, considering $(n,x)$ to be fixed and the average to be over $(k,\ell)\in\llbracket 1,n\rrbracket^2$.
Let us introduce $f(x):=\E(X_{1,x})$. Having in mind the convention of page~\pageref{convention} regarding primes, we get the following formula for $f(x)$.
\begin{lemm}
    \label{lem:formula-f}
    For every $x$, we have $f(x) = \prod_p \left( 1 - \tfrac{\min(p,x)}{p^2} \right)$.
\end{lemm}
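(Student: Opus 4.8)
The plan is to unwind the definition of $f(x)=\E(X_{1,x})$ and to exploit the independence of the cosets $B_p$ across primes. By definition, $X_{1,x}$ is the indicator of the event that every point of the horizontal segment $A:=\llbracket 1,x\rrbracket\times\{1\}$ is white, and a point is white precisely when it avoids $B_p$ for every prime $p$. Hence the event $\{X_{1,x}=1\}$ is the intersection over all primes of the events $\{B_p\cap A=\emptyset\}$. Since the family $(B_p)_p$ is independent, these events are independent, so
\[
f(x)=\mathbb{P}\Big(\bigcap_p\{B_p\cap A=\emptyset\}\Big)=\prod_p \mathbb{P}(B_p\cap A=\emptyset).
\]
First I would therefore reduce the lemma to the single-prime computation $\mathbb{P}(B_p\cap A=\emptyset)=1-\min(p,x)/p^2$.

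To carry out this computation, recall that $B_p$ is uniform among the $p^2$ cosets of $p\mathbb{Z}^2$, each such coset being the set of points congruent to a fixed pair $(a,b)\in(\mathbb{Z}/p\mathbb{Z})^2$ modulo $p$. The segment $A$ meets the coset indexed by $(a,b)$ if and only if $b\equiv 1\pmod p$ and $a$ is congruent modulo $p$ to one of the integers $1,2,\dots,x$. The key combinatorial observation is that the set $\{1,2,\dots,x\}$ hits exactly $\min(p,x)$ residues modulo $p$: if $x<p$ the $x$ integers are pairwise incongruent, while if $x\ge p$ they cover a full period and thus all $p$ residues. Consequently, exactly $\min(p,x)$ of the $p^2$ cosets intersect $A$ (one admissible value of $b$, times $\min(p,x)$ admissible values of $a$), which gives $\mathbb{P}(B_p\cap A\neq\emptyset)=\min(p,x)/p^2$ and hence $\mathbb{P}(B_p\cap A=\emptyset)=1-\min(p,x)/p^2$. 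Substituting into the product above yields the claimed formula.

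It remains only to note that the infinite product is well-defined: every factor lies in $(0,1)$ since $\min(p,x)\le p<p^2$, and for primes $p>x$ the factor equals $1-x/p^2$, so the product converges (its logarithm is dominated by $\sum_p x/p^2<\infty$). There is essentially no serious obstacle here; the only substantive point is the residue-counting identity producing the factor $\min(p,x)$, and once that is in place the independence across primes does the rest.
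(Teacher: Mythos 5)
Your proposal is correct and follows essentially the same route as the paper: factor over primes by independence, then count that exactly $\min(p,x)$ of the $p^2$ cosets of $p\mathbb{Z}^2$ meet the segment $\llbracket 1,x\rrbracket\times\{1\}$ (the paper splits this count into the cases $p\le x$ and $p>x$, which is your residue-counting observation stated in two halves). The extra remark on convergence of the infinite product is fine but not needed, as the product of numbers in $[0,1]$ is always well-defined.
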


\begin{proof}
    Let $x$ be a positive integer. We have $f(x)=\mathbb{P}($\raisebox{-0.25cm}{\horizon{1}{-0.16}{1}{-0.18}{x}{-0.16}}$)$. The event $\raisebox{-0.25cm}{\horizon{1}{-0.16}{1}{-0.18}{x}{-0.16}}$ can be rephrased ``for every prime $p$, the coset $B_p$ does not intersect the line $\llbracket1,x\rrbracket\times\{1\}$''. By independence, $f(x)$ is the product over primes $p$ of $\mathbb{P}\left(B_p\text{ avoids } \llbracket1,x\rrbracket\times\{1\}\right)$. When we have $p\le x$, avoiding $\llbracket1,x\rrbracket\times\{1\}$ is the same as avoiding $\mathbb{Z}\times\{1\}$, so that exactly $p$ of the $p^2$ possible values for $B_p$ are forbidden: in this case, $\mathbb{P}\left(B_p\text{ avoids } \llbracket1,x\rrbracket\times\{1\}\right)$ is equal to $1-\tfrac{p}{p^2}$. When $p>x$, all values of $\llbracket1,x\rrbracket\times\{1\}$ are distinct when interpreted modulo $p$, so that exactly $x$ values for $B_p$ are forbidden: in this situation, the corresponding probability is $1-\frac{x}{p^2}$.
\end{proof}

The next step is to compute $\E(X_{k,x}X_{\ell,x})$. To this end, we proceed as in the proof of Lemma~\ref{lem:formula-f}. Given $k,\ell\in\mathbb{Z}$, a prime $p$ and a positive integer $x$, the probability that $B_p$ avoids $\llbracket1,x\rrbracket\times\{k,\ell\}$ is equal to the following quantity:
\[
g_{k,\ell,p}(x):=\begin{cases}
    1 - \frac{\min(p,x)}{p^2}& \text{ if } p\,\mid\,\ell-k, \\
    1 - \frac{2\min(p,x)}{p^2} & \text{ else},
\end{cases}
\]
where $a\,\mid\,b$ is a shorthand for ``$a$ divides $b$'', i.e.~``$b$ is a multiple of $a$''. As $X_{k,x}X_{\ell,x}$ is the indicator function of the event ``for every prime $p$, the random coset $B_p$ avoids $\llbracket1,x\rrbracket\times\{k,\ell\}$'', using the fact that the random variables $B_p$ are independent when $p$ ranges over primes, we obtain this formula:
\[
\E(X_{k,x}X_{\ell,x})=g_{k,\ell}(x):=\prod_p g_{k,\ell,p}(x).
\]

The prime $p=2$ plays a specific role: as soon as $x\ge 2$, for all $(k,\ell)$ such that $k-\ell$ is odd, we have $g_{k,\ell,2}(x)=0$ hence $g_{k,\ell}(x)=0$. The assumption $x\ge 2$ is free in Proposition~\ref{prop:arith}, as we have $x\ge n>1$. For this reason, in what comes next, $p=2$ plays a special role, and so is the case for the parity of $k-\ell$.

Let us immediately take this into account, and let us also isolate the diagonal case. By using $X_{1,x}^2 = X_{1,x}$, we can rewrite
\begin{eqnarray*}
\tfrac{1}{n^2} \sum_{1 \leq k,\ell \leq n} \frac{\E(X_{k,x} X_{\ell,x})}{\E(X_{1,x})^2} &  = &  \tfrac{2}{n^2} \sum_{\substack{1 \leq k<\ell\leq n \\ 2\,\mid\,\ell-k}} \frac{\E(X_{k,x} X_{\ell,x})}{\E(X_{1,x})^2} + \tfrac{1}{n} \frac{\E(X_{1,x}^2)}{\E(X_{1,x})^2} \\
 & = & \tfrac{2}{n^2} \sum_{\substack{1 \leq k<\ell\leq n \\ 2\,\mid\,\ell-k}} \frac{g_{k,\ell}(x)}{f(x)^2} + \frac{1}{n f(x)}\,.
\end{eqnarray*}
The quantity $g_{k,\ell}(x)$ depends on $(k,\ell)$ only through the value of $\ell-k$. Besides, for every $d\in\llbracket 1,n\rrbracket$, exactly $n-d$ couples $(k,\ell)$ satisfy both $1\le k<\ell\le n$ and $\ell-k=d$. We thus have
\begin{equation}
\label{eq:goodform}
\tfrac{1}{n^2} \sum_{1 \leq k,\ell \leq n} \frac{\E(X_{k,x} X_{\ell,x})}{\E(X_{1,x})^2}  = \tfrac{2}{n^2} \sum_{\substack{1 \leq d \leq n \\ 2\,\mid\,d}} (n-d) \frac{g_d(x)}{f(x)^2} + \frac{1}{n f(x)},
\end{equation}
where, for $d$ even, we set
\begin{equation*}
g_d(x) := g_{0,d}(x) = \tfrac{1}{2} \prod_{p>2} g_{0,d,p}(x).
\end{equation*}
Treating all factors depending on whether $p$ is equal to 2 or not, and whether it divides $d$ or not, we get the following expression for every even divisor $d$ of $n$:
\begin{eqnarray*}
\frac{g_d(x)}{f(x)^2}&  =&  2 \prod_{\substack{p>2 \\ p\,\mid\,d}} \left( 1 - \tfrac{1}{p} \right)^{-1} \prod_{\substack{p>2 \\ p\,\nmid\,d}} \frac{1 - \frac{2 \min(p,x)}{p^2}}{\left(1 - \frac{\min(p,x)}{p^2}\right)^2}\,,
\end{eqnarray*}
where we use in the first product that $d\le n \le x$.
Notice that the factors corresponding to $p>x$ all  appear in the second product, and that their product is 
\[
\prod_{p>x} \frac{1 - \frac{2x}{p^2}}{\left(1 - \frac{x}{p^2}\right)^2}\,.
\]
Expanding the logarithm, we obtain 
\[
 \log \left( \frac{1 - \frac{2x}{p^2}}{\left(1 - \frac{x}{p^2}\right)^2} \right) = - \tfrac{2x}{p^2} + \mathcal{O}\left(\tfrac{x^2}{p^4}\right) - 2 \left( - \tfrac{x}{p^2} + \mathcal{O}\left(\tfrac{x^2}{p^4}\right )\right) =  \mathcal{O}\left(\tfrac{x^2}{p^4}\right).
\]
By summing the logarithms over all primes $p>x$, we get
\[
\log\left(\prod_{p>x} \frac{1 - \frac{2x}{p^2}}{\left(1 - \frac{x}{p^2}\right)^2}\right)=\sum_{p>x}\mathcal{O}\left(\tfrac{x^2}{p^4}\right)=\mathcal{O}\left(x^2\cdot\sum_{k>x}\tfrac{1}{k^4}\right)=\mathcal{O}\left(\tfrac{x^2}{x^3}\right)=\mathcal{O}\left(\tfrac{1}{x}\right).
\]
We can thus write 
\begin{eqnarray*}
\frac{g_d(x)}{f(x)^2}&=&2 \left( 1 + \mathcal{O}\left(\tfrac{1}{x}\right) \right) \prod_{\substack{p>2 \\ p\,\mid\,d}} \left( 1 - \tfrac{1}{p} \right)^{-1} \prod_{\substack{2 <p \leq x \\ p\,\nmid\, d}} \frac{1 - \frac{2}{p}}{\left(1 - \frac{1}{p}\right)^2}\,\\
&=& 2 \left( 1 + \mathcal{O}\left(\tfrac{1}{x}\right) \right) \prod_{2 <p \leq x} \frac{1 - \tfrac{2}{p}}{\left(1 - \tfrac{1}{p}\right)^2} \, \prod_{\substack{p>2 \\ p\,\mid\,d}} \frac{1-\tfrac{1}{p}}{1 - \tfrac{2}{p}}\,.
\end{eqnarray*}
By handling the product over $p>x$ of $\tfrac{1 - 2/p}{\left(1 - 1/p\right)^2}$ as we handled the product of $\frac{1 - 2x/p^2}{\left(1 - x/p^2\right)^2}$, we get
\begin{equation}
\label{eq:gdxfx2}
\frac{g_d(x)}{f(x)^2}=2 \left( 1 + \mathcal{O}\left(\tfrac{1}{x}\right) \right) C_\infty\, \theta(d)\,,
\end{equation}
where we set
\[
 C_\infty := \prod_{p>2} \frac{1 - \tfrac2p}{\left(1-\tfrac1p\right)^2 } = \prod_{p>2} \left( 1 - \tfrac{1}{(p-1)^2} \right), \quad  \theta(d) := 
\prod_{\substack{p>2 \\ p\,\mid\,d}} \frac{1-\frac{1}{p}}{1 - \frac{2}{p}} . \]
Notice that $C_\infty$ is the twin prime constant, related to the twin prime conjecture \cite[section~22.20.2]{hardywright}.
Setting $d':=\tfrac{d}2$, we can write
\[
\theta(d) = \prod_{\substack{p>2 \\ p\,\mid\,d'}} \frac{p-1}{p-2} =  \prod_{\substack{p>2 \\ p\,\mid\,d'}} \left( 1 + \tfrac{1}{p-2} \right).
\]

It will sometimes be more convenient to write $\theta(d)$ as a sum of products rather than as a product of sums. To this end, let us introduce a useful function $\phi$. Its domain of definition is the set of all $k\ge1$ that are odd and squarefree. Recall that a positive integer $k$ is \defini{squarefree} (in short: sqf) if there is no $n>1$ such that $n^2\,\mid\, k$. The function $\phi$ we introduce maps any squarefree odd $k\ge1$ to $\prod_{p\,\mid\,k} \tfrac{1}{p-2}$. Then, for any even $d$, we have 
\[
\theta(d) = \sum_{\substack{k\,\mid\,d\\k \textrm{ odd, sqf}}} \phi(k).
\]
We can thus rewrite 
\begin{eqnarray}
\sum_{\substack{1 \leq d \leq n \\ 2\,\mid\,d}}(n-d) \,\theta(d) & = & \sum_{\substack{1 \leq d \leq n \notag\\ 2\,\mid\,d}}(n-d)   \sum_{\substack{k\,\mid\,d\\k \textrm{ odd, sqf}}} \phi(k) \\
	& = & \sum_{\substack{k=1 \\ k  \textrm{ odd, sqf}}}^{\lfloor n/2 \rfloor}  \phi(k) \sum_{d'=1}^{\lfloor n/(2k) \rfloor} (n-2kd')	\notag\\
	& \leq &  \sum_{\substack{k=1 \\ k  \textrm{ odd, sqf}}}^{\lfloor n/2 \rfloor}  \phi(k) \left( n\times \tfrac{n}{2k} - k \times \tfrac{n}{2k} \left( \tfrac{n}{2k} -1 \right) \right) \notag\\
	& \leq & \sum_{\substack{k=1 \\ k  \textrm{ odd, sqf}}}^{\lfloor n/2 \rfloor}  \phi(k) \left( \tfrac{n^2}{4k} + \tfrac{n}{2} \right) \notag\\
    \sum_{\substack{1 \leq d \leq n \\ 2\,\mid\,d}}(n-d) \,\theta(d) & \leq & \tfrac{n^2}{4}  \sum_{\substack{k=1 \\ k  \textrm{ odd, sqf}}}^{\lfloor n/2 \rfloor}  \frac{\phi(k)}{k} + \tfrac{n}{2} \sum_{\substack{k=1 \\ k  \textrm{ odd, sqf}}}^{\lfloor n/2 \rfloor} \phi(k).\label{eq:comparewithphi} 
\end{eqnarray}

Let us point out that
\begin{eqnarray}
	\sum_{\substack{k\ge1 \\ k  \textrm{ odd, sqf}}} \frac{\phi(k)}{k } & = & \prod_{p>2} \left( 1 + \tfrac{1}{p(p-2)} \right) \notag\\
	& = & \prod_{p>2} \frac{(p-1)^2}{p(p-2)}\notag\\
    \sum_{\substack{k\ge1 \\ k  \textrm{ odd, sqf}}} \frac{\phi(k)}{k }&=&  \frac{1}{C_\infty}\label{eq:sumphisurk}.
\end{eqnarray}

\begin{lemm}
	For every integer $N >1$, we have
	\begin{equation}
    \label{eq:sumphi}
	\sum_{\substack{k=1 \\ k  \textrm{ odd, sqf}}}^N \phi(k) = \mathcal{O}\left(\log N\right).
	\end{equation}
\end{lemm}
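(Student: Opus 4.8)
The plan is to estimate the partial sums of $\phi$ via Rankin's trick, leveraging the Euler product of the associated Dirichlet series. Since $\phi$ is a multiplicative function supported on odd squarefree integers with $\phi(p)=\tfrac{1}{p-2}$ at each odd prime, its Dirichlet series factors: for every $s>0$,
\[
D(s):=\sum_{\substack{k\ge1\\ k \textrm{ odd, sqf}}}\frac{\phi(k)}{k^s}=\prod_{p>2}\left(1+\frac{1}{(p-2)p^s}\right),
\]
the product converging because $\sum_{p>2}\tfrac{1}{(p-2)p^s}<\infty$ whenever $s>0$. Note that $s=0$ recovers the divergent quantity of \eqref{eq:sumphisurk}'s boundary case, which is precisely why the partial sums grow and why we cannot just set $s=0$.

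First I would bound $D(s)$ for small $s>0$. Taking logarithms and using $\log(1+x)\le x$ gives $\log D(s)\le\sum_{p>2}\tfrac{1}{(p-2)p^s}$. The key is to extract the leading term with the \emph{correct} constant: the splitting $\tfrac{1}{p-2}=\tfrac1p+\tfrac{2}{(p-2)p}$ yields
\[
\sum_{p>2}\frac{1}{(p-2)p^s}=\sum_{p>2}\frac{1}{p^{1+s}}+2\sum_{p>2}\frac{1}{(p-2)p^{1+s}},
\]
where the second sum is dominated by the convergent series $\sum_{p\ge3}\tfrac{1}{(p-2)p}$, uniformly in $s\ge0$. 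For the first sum I would invoke the classical comparison $\sum_p p^{-\sigma}=\log\zeta(\sigma)+\mathcal{O}(1)$ together with $\zeta(\sigma)\le 1+\tfrac{1}{\sigma-1}$ (obtained by comparing the defining series with an integral), applied at $\sigma=1+s$. This gives $\sum_p p^{-(1+s)}\le\log(1/s)+\mathcal{O}(1)$, hence $\log D(s)\le\log(1/s)+\mathcal{O}(1)$, and finally $D(s)\le \tfrac{C}{s}$ for all $s\in(0,1]$, with $C$ a universal constant.

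With this bound in hand, Rankin's trick concludes. For any $s\in(0,1]$ and any $N$, the inequality $k\le N\Rightarrow (N/k)^s\ge1$ gives
\[
\sum_{\substack{k=1\\k \textrm{ odd, sqf}}}^N\phi(k)\le N^s\sum_{\substack{k=1\\k \textrm{ odd, sqf}}}^N\frac{\phi(k)}{k^s}\le N^s D(s).
\]
Choosing $s=\tfrac{1}{\log N}$ — which lies in $(0,1]$ as soon as $N\ge3$, the finitely many remaining values of $N$ being harmless for an $\mathcal{O}$-estimate — makes $N^s=e$ and $1/s=\log N$, so the right-hand side is at most $eC\log N$. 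This is the claimed bound $\sum_{k\le N}\phi(k)=\mathcal{O}(\log N)$.

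The main obstacle is purely the uniform control of $D(s)$ as $s\to0^+$ with the sharp constant $1$ in front of $\log(1/s)$. A cruder comparison such as $\tfrac{1}{p-2}\le\tfrac3p$ would inflate that constant to $3$ and only produce $\mathcal{O}((\log N)^3)$. Isolating the genuine sum $\sum_p p^{-(1+s)}$ with coefficient exactly one — through the splitting $\tfrac{1}{p-2}=\tfrac1p+\tfrac{2}{(p-2)p}$ and the convergence of the resulting error series — is exactly what makes the Rankin optimisation deliver the correct order $\log N$.
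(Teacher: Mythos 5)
Your proof is correct, and it takes a genuinely different route from the paper's. The paper bounds the partial sum directly by the truncated Euler product $\prod_{2<p\le N}\left(1+\tfrac{1}{p-2}\right)$ --- valid because every odd squarefree $k\le N$ has all its prime factors at most $N$ --- and then controls the logarithm of that product by Mertens' second theorem $\sum_{p\le N}\tfrac1p=\log\log N+C+o(1)$. You instead keep the full Euler product, dampen it with the weight $k^{-s}$, and optimise over $s$ via Rankin's trick. These are the two dual ways of truncating the same divergent product, and both hinge on exactly the point you emphasise at the end: extracting $\sum_p p^{-(1+s)}$ (respectively $\sum_{p\le N}\tfrac1p$) with coefficient exactly $1$ through the splitting $\tfrac{1}{p-2}=\tfrac1p+\tfrac{2}{p(p-2)}$, whose error series converges. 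The paper's version is slightly more elementary --- no bound on $\zeta(1+s)$, no optimisation parameter --- while yours is the more systematic mean-value machinery and would adapt more readily to summands whose support is not characterised simply by the size of the prime factors. One cosmetic remark: your aside that ``$s=0$ recovers the divergent quantity of \eqref{eq:sumphisurk}'s boundary case'' is slightly off, since \eqref{eq:sumphisurk} is $D(1)=1/C_\infty$, which converges; it is $D(0)=\prod_{p>2}\tfrac{p-1}{p-2}$ that diverges. This is motivation only and does not affect the argument.
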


\begin{proof}
	We can get the following bounds:
	\begin{eqnarray*}
		0\le\log \left( \sum_{\substack{k=1 \\ k  \textrm{ odd, sqf}}}^N \phi(k) \right) & \leq & \log \left( \prod_{2<p \leq N} \left( 1 + \tfrac{1}{p-2} \right) \right) \\
		& \leq &  \sum_{2 <p \leq N} \log \left( 1 + \tfrac{1}{p-2} \right)  \\
		& \leq & \sum_{2 <p \leq N} \frac{1}{p-2} \\
		& \leq & \sum_{2 <p \leq N} \frac{1}{p} + \sum_{2 <p \leq N} \frac{2}{p(p-2)} \\
		& \leq &  \log \log N + C + o(1) + \tfrac32\,.
 	\end{eqnarray*}
	In the final upper bound, the $\tfrac32$ comes from the observation that $\sum_{i=3}^\infty\frac{2}{i(i-2)}=\tfrac{3}{2}$, while the other part is given by \cite[Theorem~427]{hardywright}, where the constant $C$ is the Meissel--Mertens constant.
	Taking the exponential of this inequality yields the lemma.
\end{proof}

Putting all computations together, we successively obtain
\begin{eqnarray*}
\tfrac{2}{n^2} \sum_{\substack{1 \leq d \leq n \\ 2\,\mid\,d}} \frac{(n-d) g_d(x)}{f(x)^2} & = & \tfrac{4}{n^2} \left( 1 + \mathcal{O}\left(\tfrac1x\right) \right) C_\infty \sum_{\substack{1 \leq d \leq n \\ 2\,\mid\,d}} (n-d)\, \theta(d)\qquad\quad\text{ by \eqref{eq:gdxfx2}}\\
& \leq & \tfrac{4}{n^2} \left( 1 + \mathcal{O}\left(\tfrac1x\right) \right) C_\infty   \sum_{\substack{k=1 \\ k  \textrm{ odd, sqf}}}^{\lfloor n/2 \rfloor}  \phi(k) \left( \tfrac{n^2}{4k} + \tfrac{n}{2} \right)\quad\text{ by \eqref{eq:comparewithphi}}\\
& \leq & \tfrac{4}{n^2} \left( 1 + \mathcal{O}\left(\tfrac1x\right) \right) C_\infty  \left( \frac{n^2}{4 C_\infty} + \mathcal{O}(n \log(n)) \right)\quad\text{by \eqref{eq:sumphisurk} and \eqref{eq:sumphi}},
\end{eqnarray*}
which gives
\begin{equation}
\label{eq:offdiag}
    \tfrac{2}{n^2} \sum_{\substack{1 \leq d \leq n \\ 2\,\mid\,d}} \frac{(n-d) g_d(x)}{f(x)^2}\le1 + \mathcal{O}\left(\tfrac1x\right) + \mathcal{O}\left(\tfrac{\log(n)}{n}\right).
\end{equation}
Finally, to bound the contribution of the diagonal terms, observe that
\begin{eqnarray*}
- \log \left( \prod_{p>x} \left( 1 - \tfrac{x}{p^2} \right) \right) & = & -\sum_{p>x} \log \left(1 - \tfrac{x}{p^2} \right) \\ & \leq & 2 \sum_{p>x} \frac{x}{p^2} \\
& \leq & 2
\end{eqnarray*}
 and recall that
\[
	\prod_{p \leq x} \left(1 - \tfrac{1}{p} \right)^{-1} = \mathcal{O}\left(\log(x) \right),
	\]
	by Mertens' third theorem \cite[Theorem~429]{hardywright}. These estimates yield 
	\begin{equation}
    \label{eq:diag}
	\frac{1}{nf(x)} = \mathcal{O} \left( \tfrac{\log(x)}{n} \right).
	\end{equation}
By combining \eqref{eq:cheby}, \eqref{eq:goodform}, \eqref{eq:offdiag} and \eqref{eq:diag}, and by using $x\ge n$, we get
\[
r_n(x) = \mathcal{O} \left( \tfrac{1}{x} \right) + \mathcal{O} \left( \tfrac{\log(n)}{n} \right) + \mathcal{O} \left( \tfrac{\log(x)}{n} \right) = \mathcal{O}  \left( \tfrac{\log(x)}{n} \right),
\]
thus proving Proposition~\ref{prop:arith}.\hfill\qed

\section{Generalisation to some interesting pairs $(\Gamma,S)$}
\label{sec:gen}

In previous sections, we investigated the percolative properties of the random coprime colouring for $\Gamma=\mathbb{Z}^d$ and $S=\{x\in\mathbb{Z}^d\,:\,\|x\|_1=1\}$. In this section, we cover other $(\Gamma,S)$ of interest.
As a warm-up, let us first prove that there is an infinite white cluster as soon as $d\ge 3$.

\subsection{Existence of an infinite white cluster when $d\ge3$}
\label{sec:warm-up-general}
Let us make the assumptions of Proposition~\ref{prop:exist-white} and prove its conclusion. Let $s$ be some element of $S$. Among all vectors of the form $\lambda s$ for $\lambda\in (0,\infty)$, let $u_d$ be the shortest one that belongs to $\Gamma$. We can pick $u_1,\dots,u_{d-1}\in \Gamma$ such that $(u_1,\dots,u_d)$ is a basis of the free $\mathbb{Z}$-module $\Gamma$. In these coordinates, the argument of Section~\ref{sec:warm-up} applies. Indeed, the image of $\mathbb\{0\}^2\times\mathbb{Z}^{d-2}$ by $x\longmapsto x_1u_1+\dots+x_du_d$ contains an infinite connected set, namely the infinite path $(0,s,2s,3s,\dots)$.\hfill\qed

\subsection{Statement of the general results}
\label{sec:gen-statements}
For the remaining of Section~\ref{sec:gen}, our task will be to prove Theorems~\ref{thm:examples}, \ref{thm:examples-black} and \ref{thm:spread-out}.
To do so, we find setups that allow to adapt the arguments of previous sections, and then we check that each of our $(\Gamma,S)$ of interest fits one of these setups. These setups are given by Theorems~\ref{thm:setup} and \ref{thm:setupblack}.

\begin{theo}
    \label{thm:setup}
    Let $d\ge 2$. Let $\Gamma$ be a lattice in $\mathbb{R}^d$ that is furthermore a {\color{col}subset of $\mathbb{Z}^d$}. In other words, let $\Gamma$ be a finite-index subgroup of $\mathbb{Z}^d$. Let $S$ be an admissible generating subset of $\Gamma$. The set $\Gamma$ is endowed with the structure of Cayley graph given by $S$.

    Assume that for every $i\in\llbracket 1,d\rrbracket$, the following holds:
    \begin{enumerate}
        \item\label{item:setup-connect} the set of all $x\in \Gamma$ such that $x_i=0$ is a connected subset of $\mathrm{Cayley}(\Gamma,S)$,
        \item\label{item:setup-douanes} for any pair $(x,z)$ of adjacent vertices in $\mathrm{Cayley}(\Gamma,S)$ satisfying $x_i<0<z_i$, there is some $y\in \Gamma$ that is adjacent to $x$ or $z$ and that satisfies $y_i=0$.
    \end{enumerate}

    Then, the number of infinite white clusters is almost surely equal to 1.
\end{theo}

Here is a way to make sense of the assumptions of Theorem~\ref{thm:setup}. Think of coordinate-hyperplanes as walls that can be used to construct boxes, such as rectangles or their higher-dimensional versions. Then, Assumption~\ref{item:setup-connect} states that the walls are connected. As for Assumption~\ref{item:setup-douanes}, it implies that any path going from one side of the wall to the other must get adjacent to the wall --- or, in other words, that the 1-neighbourhood of the wall does separate the two sides of the wall. We deem that these assumptions are rather natural, given the arguments used in Section~\ref{sec:reduction}.

Theorem~\ref{thm:setup} can be stated in a way that is more conceptual, less coordinate-dependent. The statement goes as follows.

\begin{theo}
    \label{thm:coordinate-free}
    Let $d\ge2$. Let $\Gamma$ be a lattice in $\mathbb{R}^d$. Let $S$ be an admissible generating subset of $\Gamma$. The set $\Gamma$ is endowed with the structure of Cayley graph given by $S$.

    Assume that there is a basis $(\varphi_1,\dots,\varphi_d)$ of linear forms on $\mathbb{R}^d$ such that, for every $i$, the following conditions hold:
    \begin{enumerate}
        \item we have $\varphi_i(\Gamma)\subset \mathbb{Z}$,
        \item the set of all $x\in \Gamma$ such that $\varphi_i(x)=0$ is a connected subset of $\mathrm{Cayley}(\Gamma,S)$,
        \item for any pair $(x,z)$ of adjacent vertices in $\mathrm{Cayley}(\Gamma,S)$ satisfying $\varphi_i(x)<0<\varphi_i(z)$, there is some $y\in \Gamma$ that is adjacent to $x$ or $z$ and that satisfies $\varphi_i(y)=0$.
    \end{enumerate}

    Then, the number of infinite white clusters is almost surely equal to 1.
\end{theo}

\begin{proof}[Proof of equivalence between Theorems~\ref{thm:setup} and \ref{thm:coordinate-free}]
Assume that Theorem~\ref{thm:coordinate-free} holds. Then Theorem~\ref{thm:setup} follows by taking $\Gamma$ as in Theorem~\ref{thm:setup} and applying Theorem~\ref{thm:coordinate-free} with $\varphi_i:x\longmapsto x_i$.

Conversely, assume that Theorem~\ref{thm:setup} holds. Take $\Gamma$ and $(\varphi_1,\dots,\varphi_d)$ as in Theorem~\ref{thm:coordinate-free}. Let $\Phi:x\longmapsto(\varphi_1(x),\dots,\varphi_d(x))$. To conclude, it now suffices to apply Theorem~\ref{thm:setup} to $\tilde\Gamma:=\Phi(\Gamma)$.
\end{proof}

Theorem~\ref{thm:setup} shall be used to prove Theorems~\ref{thm:examples} and \ref{thm:spread-out}, which cover very nice examples but only provides the number of infinite white clusters. By modifying the second assumption, it is possible to get the number of infinite black clusters as well. This comes at the cost of covering less lattices. Here is the statement, which entails Theorem~\ref{thm:examples-black}.

\begin{theo}
    \label{thm:setupblack}
    Let $d\ge 2$. Let $\Gamma$ be a finite-index subgroup of $\mathbb{Z}^d$. Let $S$ be an admissible generating subset of $\Gamma$. The set $\Gamma$ is endowed with the structure of Cayley graph given by $S$.

    Assume that for every $i\in\llbracket 1,d\rrbracket$, the following holds:
    \begin{enumerate}
        \item the set of all $x\in \Gamma$ such that $x_i=0$ is a connected subset of $\mathrm{Cayley}(\Gamma,S)$,
        \item for any pair $(x,z)$ of adjacent vertices in $\mathrm{Cayley}(\Gamma,S)$ satisfying $x_i\le0\le z_i$, $x_i=0$ or $z_i=0$.
    \end{enumerate}

    Then, the number of infinite white clusters is almost surely equal to 1, and that of infinite black clusters is almost surely equal to 0.
\end{theo}

Section~\ref{sec:interestinglattices} gathers diverse information on the triangular lattice, the $D_d$ lattice, the $E_8$ lattice and the Leech lattice. This includes definitions and context. Section~\ref{sec:proba-proof} is then dedicated to proving Theorem~\ref{thm:setup}, and Section~\ref{sec:proba-proof-noire} explains how to adapt the arguments of Section~\ref{sec:proba-proof} to get Theorem~\ref{thm:setupblack}.
But from there, how do we get Theorems~\ref{thm:examples}, \ref{thm:examples-black} and \ref{thm:spread-out}?

Regarding the case of the triangular lattice, simply observe that there is a linear automorphism of $\mathbb{R}^d$ mapping $(\Gamma,S)$ to $(\mathbb{Z}^2,\{ (\pm1,0),(0,\pm1),\pm(1,1)\})$, which indeed satisfies the assumptions of Theorem~\ref{thm:setupblack}.
Actually, by using this particular representation of the triangular lattice, one can simply run the proof of Section~\ref{sec:2-dim} and check that it works rather than resorting to Theorem~\ref{thm:setupblack}. Theorem~\ref{thm:spread-out} is an immediate corollary of Theorem~\ref{thm:setup}, and one could also prove it by rerunning the proof of Section~\ref{sec:reduction}.
The remaining cases are thus
$D_d$, $E_8$ and the Leech lattice. They are the reason why we prove Theorems~\ref{thm:setup} and \ref{thm:setupblack}.

It is the purpose of Section~\ref{sec:setup-proof} to prove that, indeed, $D_d$ satisfies the assumptions of Theorem~\ref{thm:setupblack} for $d\ge 3$, and that $E_8$ and the Leech lattice satisfy the assumptions of Theorem~\ref{thm:setup}.
As for the case of $D_2$, it turns out to be the same as $(\mathbb{Z}^2,\{(\pm1,0),(0,\pm1)\})$, up to linear automorphism, so that it holds by Section~\ref{sec:2-dim}.

\vspace{0.25cm}

Sections~\ref{sec:proba-proof} and \ref{sec:proba-proof-noire} can be read independently of Section~\ref{sec:interestinglattices}. Section~\ref{sec:setup-proof} only requires knowledge of the definition of the lattices we consider, which are given in Section~\ref{sec:interestinglattices-def}. 

\subsection{Remarkable lattices}
\label{sec:interestinglattices}
In this section, we provide the definitions of several remarkable lattices, recall some of their properties, and explain why they are interesting. A reference for this section is \cite{conwaysloane}.

For our purposes, as soon as a pair $(\Gamma,S)$ is taken care of, so is its image $(\Phi(\Gamma),\Phi(S))$ for any linear automorphism $\Phi$ of $\mathbb{R}^d$. As linear automorphisms act transitively on lattices, the reason why we care about some specific $\Gamma$ is to define an interesting $S$ by taking the set of nonzero vectors of minimal Euclidean length in $\Gamma$ --- in our examples, this set indeed turns out to be a generating set of $\Gamma$. Therefore, if $\tilde\Gamma=\Phi(\Gamma)$ for some similarity $\Phi$, that is to say the composition of a linear dilation with a linear isometry, then it is not harmful if what some authors call by a given name is $\Gamma$ or $\tilde\Gamma$. In what comes next, the names of the lattices are standard but you may encounter them in the literature twisted by some similarity, i.e.~expressed in different yet suitable coordinates.

\subsubsection{Definitions}
\label{sec:interestinglattices-def}
Identifying $\mathbb{C}$ with $\mathbb{R}^2$, the \defini{triangular lattice} is defined to be $\mathbb{Z}[e^{i\pi/3}]$. The nonzero elements of minimal norm are exactly $e^{ik\pi/3}$ for $k\in\mathbb{Z}/6\mathbb{Z}$. There are 6 such vectors and they generate the triangular lattice.

For $d\ge2$, the \defini{lattice $D_d$} is defined to be the set of all $x\in\mathbb{Z}^d$ such that $x_1+\dots+x_d$ is even. The minimal Euclidean norm of a nonzero element of $D_d$ is 2. The nonzero elements of minimal norm are exactly given by the $x\in\mathbb{Z}^d$ such that exactly two of its entries belong to $\{-1,1\}$ while the other ones are all zero.
There are ${d\choose 2}\cdot2^2=2d(d-1)$ such vectors and they generate the $D_d$ lattice. We introduce this general family of lattices because it is a standard family that we can fully handle but we mostly care because the $D_{\color{col}4}$ lattice is worthy of interest.

The \defini{lattice $E_8$} is defined to be the set of all $x\in\mathbb{Z}^8$ such that all $x_i$ have the same parity and $\sum_i x_i$ is a multiple of 4. The minimal Euclidean norm of a nonzero element of $E_8$ is $\sqrt{8}$. The nonzero elements of minimal norm are exactly given by the $x\in\mathbb{Z}^8$ of one of the following types:
\begin{itemize}
    \item either all $x_i$ belong to $\{-1,+1\}$,
    \item or all $x_i$ are equal to 0 except for exactly two of them, that furthermore belong to $\{-2,+2\}$.
\end{itemize}
There are $2^8+{8\choose 2}\cdot2^2=240$ such vectors and they generate the $E_8$ lattice.

The definition of the Leech lattice is more involved. We first need the so-called extended binary Golay code. We denote by $\mathbb{F}_2$ the field with 2 elements, which is the same as $\mathbb{Z}/2\mathbb{Z}$. Endow $\mathbb{F}_2^{24}$ with a distance by setting $d(w,w'):=\#\{i\,:\,w_i\neq w'_i\}$. It turns out that there is a 12-dimensional linear subspace of the $\mathbb{F}_2$-vector space $\mathbb{F}_2^{24}$ any two distinct elements of which are at distance at least 8 of each other.\footnote{Here is one explicit construction of the extended binary Golay code. Instead of working in $\mathbb{F}_2^{24}$, let us work in $\mathbb{F}_2^{V}\times\mathbb{F}_2^V$, where $V$ is the vertex-set of the icosahedron. Say that a pair $(x,y)\in\mathbb{F}_2^{V}\times\mathbb{F}_2^V$ belongs to the extended binary Golay code if for every vertex $v$ of the icosahedron, $y_v$ is equal to the sum over all $u$ not adjacent to $v$ (including $v$ itself) of $x_u$. It is a nice exercise to check that $(x,y)$ belongs to the extended binary Golay code if and only if $(y,x)$ belongs to the extended binary Golay code.} Furthermore, this linear subspace is unique up to permutations $w\longmapsto (w_{\sigma(1)},\dots,w_{\sigma(24)})$. From now on, fix one such linear subspace and call it the \defini{extended binary Golay code}. Given two elements of the extended binary Golay code, their distance is always an element of $\{0,8,12,16,24\}$. A subset of $\llbracket 1,24\rrbracket$ is called an \defini{octad} (resp. \defini{dodecad}) if its indicator function modulo 2 is an element of the extended binary Golay code that furthermore is at distance exactly 8 (resp. 12) from $(0,\dots,0)$. There is no need to give a name corresponding to the value 16, as corresponding $x$ can be referred to as complements of octads. There are 759 octads and $2\,576$ dodecads, and every dodecad can be expressed as the symmetric difference of two octads (in a highly non-unique way).

The bit of weight 0 of an element $x$ of $\mathbb Z$ is 0 of $x$ is even and 1 if $x$ is odd. Recursively, for $k>0$, if $x\in\mathbb{Z}$ and $b\in\{0,1\}$ is its bit of weight 0, then the bit of weight $k$ of $x$, is defined to be the bit of weight $k-1$ of $\frac{x-b}2$. In other words, we are defining the binary digits of $x$, in a way that is well behaved for negative integers.

The \defini{Leech lattice} is the set of all $x\in \mathbb{Z}^{24}$ satisfying all these conditions:
\begin{enumerate}
    \item\label{item:bit-zero} all $x_i$ have the same bit of weight 0,
    \item\label{item:bit-one} if we denote by $y$ the element of $\mathbb{F}_2^{24}$ defined by setting $y_i$ to be the {\footnotesize (congruence modulo 2 of the)} bit of weight 1 of $x_i$, then $y$ belongs to the extended binary Golay code,
    \item\label{item:bit-two} the sum of the bits of weight 2 of all $x_i$ has the same parity as $x_1$ {\footnotesize(hence as any $x_j$, because of Item~\ref{item:bit-zero})}.
\end{enumerate}

The minimal Euclidean norm of a nonzero element of the Leech lattice is $\sqrt{32}=4\sqrt{2}$. The nonzero elements of minimal norm are exactly given by the $x\in\mathbb{Z}^{24}$ of one of the following types:
\begin{itemize}
    \item choose any pair in $\llbracket 1,24\rrbracket$ and ask for $x$ to be zero outside the pair and to be $\pm 4$ on the pair, where the sign is chosen independently for each element of the pair,
    \item choose any octad and ask for $x$ to be zero outside the octad and to be $\pm2$ on the octad, where the signs are allowed to depend on the element of the octad under the constraint that there is an even number of minus signs,
    \item the vector $x$ can be written $(\varepsilon_1y_1,\dots,\varepsilon_{24}y_{24})$ where $\varepsilon$ is a vector of signs such that $(\mathds{1}_{\varepsilon_1=+},\dots,\mathds{1}_{\varepsilon_{24}=+})$ is an element of the extended binary Golay code and all $y_i$ are equal to 1 except for exactly one $y_i$ which is equal to $-3$.
\end{itemize}
There are $1\,104 + 97\,152 + 98\,304 = 196\,560$ such vectors and they generate the Leech lattice.

When studying the Leech lattice, we will make use of the Mathieu groups $M_{24}$ and $M_{12}$ --- see \cite[Chapter~10]{conwaysloane}. The Mathieu group $M_{24}$ is the group of all permutations $\sigma\in\mathfrak{S}_{24}$ that preserve the extended binary Golay code. In other words, it is the set of all $\sigma$ such that, for every $w\in \mathbb{F}_2^{24}$, the element $(w_{\sigma(1)},\dots,w_{\sigma(24)})$ belongs to extended binary Golay code if and only if so is the case of $w$. Recall that a group action $G\curvearrowright X$ is \defini{$n$-transitive} if the diagonal action of $G$ on $n$-tuples of distinct elements of $X$ is transitive. The Mathieu group $M_{24}$ acts 5-transitively on $\llbracket1,24\rrbracket$. It also acts transitively on the set of dodecads.

Pick some dodecad $\Delta$. The group $M_{12}(\Delta)$ is defined to be the set of all $\sigma\in M_{24}$ such that, for all $i$, we have $\sigma(i)\in \Delta$ if and only if $i\in \Delta$. Different choices of $\Delta$ provide isomorphic $M_{12}(\Delta)$. Even better, they are conjugate as subgroups of $M_{24}$, as $M_{24}$ acts transitively on dodecads. The group $M_{12}(\Delta)$ acts 5-transitively on $\Delta$. It also acts 5-transitively on the complement dodecad $\Delta':=\llbracket1,24\rrbracket\setminus \Delta$, as $M_{12}(\Delta)=M_{12}(\Delta')$.

Let us point out that these properties of 5-transitivity are exceptional. If $X$ is a finite set such that $|X|\ge 5$ and if $G\curvearrowright X$ is a faithful 5-transitive group action, then this action is isomorphic to one given in the following list {\small(and all actions in the list are indeed faithful and 5-transitive)}:
\begin{itemize}
    \item the usual action of $\mathfrak{S}_n$ on $\llbracket1,n\rrbracket$ for $n\ge 5$,
    \item the usual action of $\mathfrak{A}_n$ on $\llbracket1,n\rrbracket$ for $n\ge 7$,
    \item the usual action of $M_{24}$ on $\llbracket1,24\rrbracket$,
    \item the usual action of $M_{12}(\Delta)$ on $\Delta$.
\end{itemize}
We say that two group actions $G\curvearrowright X$ and $H\curvearrowright Y$ are isomorphic if there are a group isomorphism $\varphi : G\to H$ and a bijection $f:X\to Y$ such that for all $g\in G$ and $x\in X$, we have $f(g\cdot x)=\varphi(g)\cdot f(x)$.

\subsubsection{Why are these lattices remarkable?}
\label{sec:remarkable}
The problem of kissing spheres goes as follows. Let $\mathbb{R}^d$ be the usual Euclidean space: what is the maximal number of disjoint open balls of radius 1 that one can arrange in such a way that each of them is tangent to the ball of radius 1 centred at the origin {\small(we consider that a ball is not tangent to itself)}?

The answer is currently known exactly in dimensions 1, 2, 3, 4, 8 and 24. In dimension 2, the optimal number is attained by putting the centres according to the minimal nonzero vectors of the triangular lattice, properly rescaled so that these vectors have norm 2. In dimension 3 (resp. 4, 8, 24), the same holds with the $D_3$ (resp. $D_4$, $E_8$, Leech) lattice. In dimensions 1, 2, 8 and 24, the optimal configurations are ``rigid'' (meaning: unique up to linear isomorphism) while, in dimensions 3 and 4, ``there is room to let the spheres wiggle'' in optimal configurations.

Let us move on to the sphere packing problem. In the Euclidean space $\mathbb{R}^d$, a sphere packing is simply a collection of mutually disjoint unit open balls. Given a sphere packing, one can define its (upper) density as follows. Denoting by $A$ the union of all balls of the packing, the upper density is $\limsup_{r\to\infty}\frac{\mathrm{Vol}(A\cap B_r)}{\mathrm{Vol}(B_r)}$, where $B_r$ stands for the ball of radius $r$ centred at the origin. If the limit $\lim_{r\to\infty}\frac{\mathrm{Vol}(A\cap B_r)}{\mathrm{Vol}(B_r)}$ exists, then we call it the density of the packing. Solving the sphere packing problem in dimension $d$ means to find a sphere packing of $\mathbb{R}^d$ that admits a certain density $\delta$ and such that no sphere packing of $\mathbb{R}^d$ has an upper density strictly larger than $\delta$.

The answer to the sphere packing problem is currently known in dimensions~1, 2, 3, 8 and 24. For $d=1$ (resp. 2, 3, 8, 24), a sphere packing of maximal density is given by taking the set of centres to be $\mathbb{Z}$ (resp. the triangular lattice, the face-centred cubic lattice, the $E_8$ lattice, the Leech lattice), suitably dilated so that the minimal distance between two distinct points is exactly 2. The 2-dimensional case was proven in the first half of the twentieth century \cite{thue, toth}. The 3-dimensional case was solved by Hales in 1998 \cite{hales} --- see also \cite{hales-et-al}. The case $d=8$ was solved in 2016 by Viazovska \cite{viazovska}. The case $d=24$ was treated one week later \cite{viazovska-et-al} by adapting the techniques from \cite{viazovska}. The same team later proved that the $E_8$ and Leech lattices are ``universally optimal'' \cite{viazovska-et-al-bis}. This means that, for a wide class of functional-minimising problems, the configuration of points $E_8$ is optimal in dimension~8 and the Leech lattice is so in dimension~24. This vastly generalises \cite{viazovska, viazovska-et-al}.

Another reason to care about the triangular lattice, $D_4$, $E_8$ and Leech is that they have a richer group of isometries than the hypercubic lattice of matching dimension. As we shall see, $D_4$ and the Leech lattice are related to situations involving an \emph{exceptional} amount of symmetry.

In dimension $d\ge 5$, there are exactly \emph{three} convex regular polytopes, up to similarity: the cube, the cocube, and the regular simplex --- which is self-dual. In dimension~3, there are \emph{five} of them, the so-called Platonic solids: to the cube, the regular octahedron and the regular tetrahedron, we add the regular icosahedron and the regular dodecahedron. And in dimension~4, there are \emph{six} of them. On top of the ones existing in all dimensions, there are the 120-cell {\footnotesize(the 4-dimensional version of the dodecahedron)}, the 600-cell {\footnotesize(the 4-d version of the icosahedron)} and the 24-cell. The 24-cell thus stands out as one of a kind. Among all dimensions $d\ge 3$, it is the unique convex regular polytope that is self-dual and is not a simplex. It turns out that the 24-cell can be constructed as the convex hull of the set of nonzero vectors of minimal norm in $D_4$.

Before accounting for the exceptional amount of symmetry in the Leech lattice, let us first discuss the extended binary Golay code. The fact that any two distinct points are necessarily at distance 8 or more means that if we give you an element of it via a noisy channel, then you can recover it perfectly if at most 3 bits have been altered, and you can detect mistransmission if exactly 4 bits have been modified. The extended binary Golay code was indeed used as an error-correcting code by the interstellar probes Voyager~1 and Voyager~2.

The extended binary Golay code is an exceptionally symmetric object. We have used it to defined the Mathieu groups $M_{24}$ and $M_{12}$. By taking the action $M_{24}\curvearrowright \llbracket1,24\rrbracket$ and considering the stabiliser of a point (resp. a couple of distinct points, a couple dodecad/point), one defines the Mathieu group $M_{23}$ (resp. $M_{22}$, $M_{11}$). These five Mathieu groups are 5 of the 26 sporadic finite simple groups. Let us recall the meaning of ``sporadic'' in this context. The classification of finite simple groups states that the list of all finite simple groups is given by: $\mathbb{Z}/p\mathbb{Z}$ for $p$ prime, $\mathfrak{A}_n$ for $n\neq 4$, the so-called ``groups of Lie type'' {\footnotesize(regrouping 18 infinite families of groups)}, and a list of 26 ``outliers'' --- the so-called sporadic groups. With the extended binary Golay code, we catch 5 of these exceptional groups. 
By studying symmetries of the Leech lattice, Conway was able to discover 3 other sporadic groups. The Leech lattice was also used to recover 4 previously discovered non-Mathieu sporadic groups.

\subsection{Proof of Theorem~\ref{thm:setup}}
\label{sec:proba-proof}
By definition of our percolation process, we have, for every prime $p$, a uniformly distributed coset $B_p$ of $p\Gamma$ in $\Gamma$, and these cosets form an independent family of random variables. For every $p$, there is a unique random coset $\tilde{B}_p$ of $p\mathbb{Z}^d$ in $\mathbb{Z}^d$ such that $B_p\subset \tilde{B}_p$ --- more explicitly, we have $\tilde{B}_p=B_p+p\mathbb{Z}^d$. As the random variables $B_p$ form an independent family, so is the case of the random cosets $\tilde{B}_p$. But, for a given prime $p$, what is the distribution of $\tilde{B}_p$?

Let $p$ be a prime. Composing the inclusion map $\Gamma\to\mathbb{Z}^d$ with the reduction modulo $p$ map $\mathbb{Z}^d\to\mathbb{Z}^d/p\mathbb{Z}^d$, we get a map $\varphi_p:\Gamma\to\mathbb{Z}^d/p\mathbb{Z}^d$. As its kernel contains $p\Gamma$, this gives rise to a well-defined morphism $\psi_p:\Gamma/p\Gamma\to\mathbb{Z}^d/p\mathbb{Z}^d$. This is related to the discussion above, as we have $\tilde{B}_p=\psi_p(B_p)$. Let $\mathbf{G}_p$ denote the image of $\psi_p$, which is also the image of $\varphi_p$. As the distribution of $B_p$ is invariant under the action of $\Gamma/p\Gamma$, the distribution of the $\mathbf{G}_p$-valued random variable $\tilde{B}_p$ has to be $\mathbf{G}_p$-invariant. Therefore, $\tilde{B}_p$ is uniformly distributed on the finite group $\mathbf{G}_p$.

Let us now prove that, provided $p$ is taken large enough, we have $\mathbf{G}_p=\mathbb{Z}^d/p\mathbb{Z}^d$. Let $N$ denote the index of $\Gamma$ in $\mathbb{Z}^d$. As $\Gamma$ has index $N$ in $\mathbb{Z}^d$ and as the reduction map $\mathbb{Z}^d\to\mathbb{Z}^d/p\mathbb{Z}^d$ is onto, the image of $\varphi_p$ has index at most $N$ in $\mathbb{Z}^d/p\mathbb{Z}^d$. The index of $\mathbf{G}_p$ is thus at most $N$ in $\mathbb{Z}^d/p\mathbb{Z}^d$. But the index of any subgroup of $\mathbb{Z}^d/p\mathbb{Z}^d$ is always a power of $p$. If we assume that $p>N$, then the only possible value for the index is 1, thus proving that $\mathbf{G}_p=\mathbb{Z}^d/p\mathbb{Z}^d$.
We can summarise what we obtained as follows.

\begin{lemm}
\label{lem:gammatozd}
    Let $\Gamma$ be a subgroup of $\mathbb{Z}^d$ of finite index $N$. Let $(B_p)$ be a prime-indexed sequence of independent uniform cosets of $p\Gamma$ in $\Gamma$.  
    Let $\mathbf{G}_p$ be defined as above.
    Let $(B'_p)$ be a prime-indexed sequence of independent uniform cosets of $p\mathbb{Z}^d$ in $\mathbb{Z}^d$.

    Then, the distribution of $(B_p)$ is the same as the conditional distribution of $(B'_p)$ given the event ``for all $p\le N$, we have $B'_p\in\mathbf{G}_p$''.
    
    In particular, if a sequence of events has a probability that converges to 0 (resp. 1) for $(B'_p)$, then so is the case for $(B_p)$. Likewise, if the convergence to 0 or 1 occurs summably fast for $(B'_p)$, then so is the case for $(B_p)$.
\end{lemm}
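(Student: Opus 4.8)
The plan is to verify the distributional identity one prime at a time and then glue the primes together using independence. The work done just before the lemma already supplies the two facts about the left-hand side: each induced coset $\tilde{B}_p=\psi_p(B_p)=B_p+p\mathbb{Z}^d$ is uniformly distributed on the subgroup $\mathbf{G}_p\subset \mathbb{Z}^d/p\mathbb{Z}^d$, and the family $(\tilde{B}_p)$ is independent. It is through these induced cosets that the $\Gamma$-indexed data is compared with $(B'_p)$, so I would phrase the whole argument in terms of $(\tilde{B}_p)$. The one elementary observation to isolate first is purely about uniform laws on finite sets: conditioning a uniform random coset of $p\mathbb{Z}^d$ on the event that it belongs to $\mathbf{G}_p$ renormalises it to the uniform law on $\mathbf{G}_p$.

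With that in hand, I would carry out the per-prime matching. For $p>N$ we already established $\mathbf{G}_p=\mathbb{Z}^d/p\mathbb{Z}^d$, so no conditioning takes place and $B'_p$ is uniform over all cosets, which is exactly the law of $\tilde{B}_p$. For $p\le N$ the conditioned coordinate $B'_p$ is uniform on $\mathbf{G}_p$ by the observation above, again matching $\tilde{B}_p$. To upgrade this from marginals to the joint law, note that the conditioning event $\mathcal{C}:=\{\,\forall p\le N,\ B'_p\in\mathbf{G}_p\,\}$ is an intersection of events each depending on a single $B'_p$ with $p\le N$, and in particular is independent of all coordinates with $p>N$. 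Since the $(B'_p)$ are independent, conditioning on $\mathcal{C}$ keeps the coordinates independent, turns each $B'_p$ with $p\le N$ into a uniform variable on $\mathbf{G}_p$, and leaves each $B'_p$ with $p>N$ uniform over all cosets; this is precisely the law of $(\tilde{B}_p)$.

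The quantitative ``in particular'' clause then follows from a one-line bound. The event $\mathcal{C}$ has probability $c:=\prod_{p\le N}\tfrac{|\mathbf{G}_p|}{p^{d}}$, a finite product of strictly positive factors, so $c>0$ and the conditioning is legitimate. For any event $A$ one has $\mathbb{P}(A\mid \mathcal{C})\le \mathbb{P}(A)/c$ and $\mathbb{P}(A^{c}\mid \mathcal{C})\le \mathbb{P}(A^{c})/c$; hence a probability tending to $0$ (resp.\ to $1$) for $(B'_p)$ forces the same limit for $(\tilde{B}_p)$, and a summable rate transfers as well since $\sum_n \mathbb{P}(A_n\mid \mathcal{C})\le c^{-1}\sum_n \mathbb{P}(A_n)$. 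There is no serious obstacle here: the argument is bookkeeping, and the only points deserving care are to work with the induced cosets $\tilde{B}_p$ rather than the $B_p$ themselves, and to observe that the conditioning concerns only the finitely many primes $p\le N$, which is exactly what keeps $c$ bounded away from $0$.
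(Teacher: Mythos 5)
Your proposal is correct and follows essentially the same route as the paper, which establishes exactly the ingredients you use (each $\tilde{B}_p$ is uniform on $\mathbf{G}_p$, the family is independent, and $\mathbf{G}_p=\mathbb{Z}^d/p\mathbb{Z}^d$ for $p>N$) in the discussion preceding the lemma and then states the conditional-distribution identity and the transfer of limits without further detail; you simply make explicit the renormalisation of a uniform law under conditioning, the preservation of independence when the conditioning event involves only the finitely many primes $p\le N$, and the bound $\mathbb{P}(A\mid\mathcal{C})\le\mathbb{P}(A)/c$ with $c=\prod_{p\le N}|\mathbf{G}_p|/p^d>0$. Your remark that the statement should really be read in terms of the induced cosets $(\tilde{B}_p)$ rather than $(B_p)$ themselves is exactly the right reading of the paper's intent.
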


Ultimately, our goal is to mimic the proof of Section~\ref{sec:reduction}.
Lemma~\ref{lem:gammatozd} will be useful to transfer some probabilistic understanding from $\mathbb{Z}^d$ to $\Gamma$. Let us move on to more geometric considerations.

For every $i\in\llbracket 1,d\rrbracket$, let $k_i$ be such that the image of the map $\Gamma\ni x\longmapsto x_i\in\mathbb{Z}$ is $k_i\mathbb{Z}$. Let $\Gamma':=\{(\tfrac{x_1}{k_1},\dots,\tfrac{x_d}{k_d})\,:\, x\in \Gamma\}$ and let $S':=\{(\tfrac{x_1}{k_1},\dots,\tfrac{x_d}{k_d})\,:\, x\in S\}$. The pair $(\Gamma',S')$ still satisfies the assumptions of Theorem~\ref{thm:setup} but also the additional property that, for every $i$, the map $\Gamma'\ni x\longmapsto x_i\in\mathbb{Z}$ is onto. Besides, the conclusion of Theorem~\ref{thm:setup} holds for $(\Gamma,S)$ if and only if it holds for $(\Gamma',S')$. {\bf It is therefore without loss of generality that, from now on, we assume that for every $i$, the map $\Gamma\ni x\longmapsto x_i\in\mathbb{Z}$ is onto.} This shall be used in Claims~\ref{claim:douanes} and \ref{claim:connect}.

To perform our geometric constructions, we will use the following notation. For every $(i,j)\in\llbracket 1,d\rrbracket^2$ satisfying $i\neq j$, every $\ell\in\mathbb{Z}$ and every $(a,b)\in\mathbb{Z}^2$ satisfying $a\le b$, let $H_{i,j}^\ell(a,b)$ be the set of all $x\in \Gamma$ such that $x_i=\ell$ and $a\le x_j\le b$. In other words, $H_{i,j}^\ell(a,b)$ is the preimage by the map $\Gamma\ni x\longmapsto (x_i,x_j)\in\mathbb{Z}^2$ of the vertical line segment $\{\ell\}\times\llbracket a,b\rrbracket$.

Let $c\ge0$ be a suitable constant --- more precisely, let it be a constant satisfying $c\ge \max_{s\in S}\|s\|_\infty$ and such that the statements of the forthcoming Claims~\ref{claim:douanes} and \ref{claim:connect} hold. For every $n\ge 1$, let $A_n$ denote the following event: ``for every $(i,j)\in\llbracket 1,d\rrbracket^2$ satisfying $i\neq j$, there are $\ell_{i,j}^-\in\llbracket -3n,-n\rrbracket$ and $\ell_{i,j}^+$ in $\llbracket n,3n\rrbracket$ such that all elements of $H_{i,j}^{\ell_{i,j}^-}(-3n-2c,3n+2c)$ and $H_{i,j}^{\ell_{i,j}^+}(-3n-2c,3n+2c)$ are white''. See Figure~\ref{fig:fatannulus}.

\begin{figure}[h!!]
    \centering
    \includegraphics[width=12.06cm]{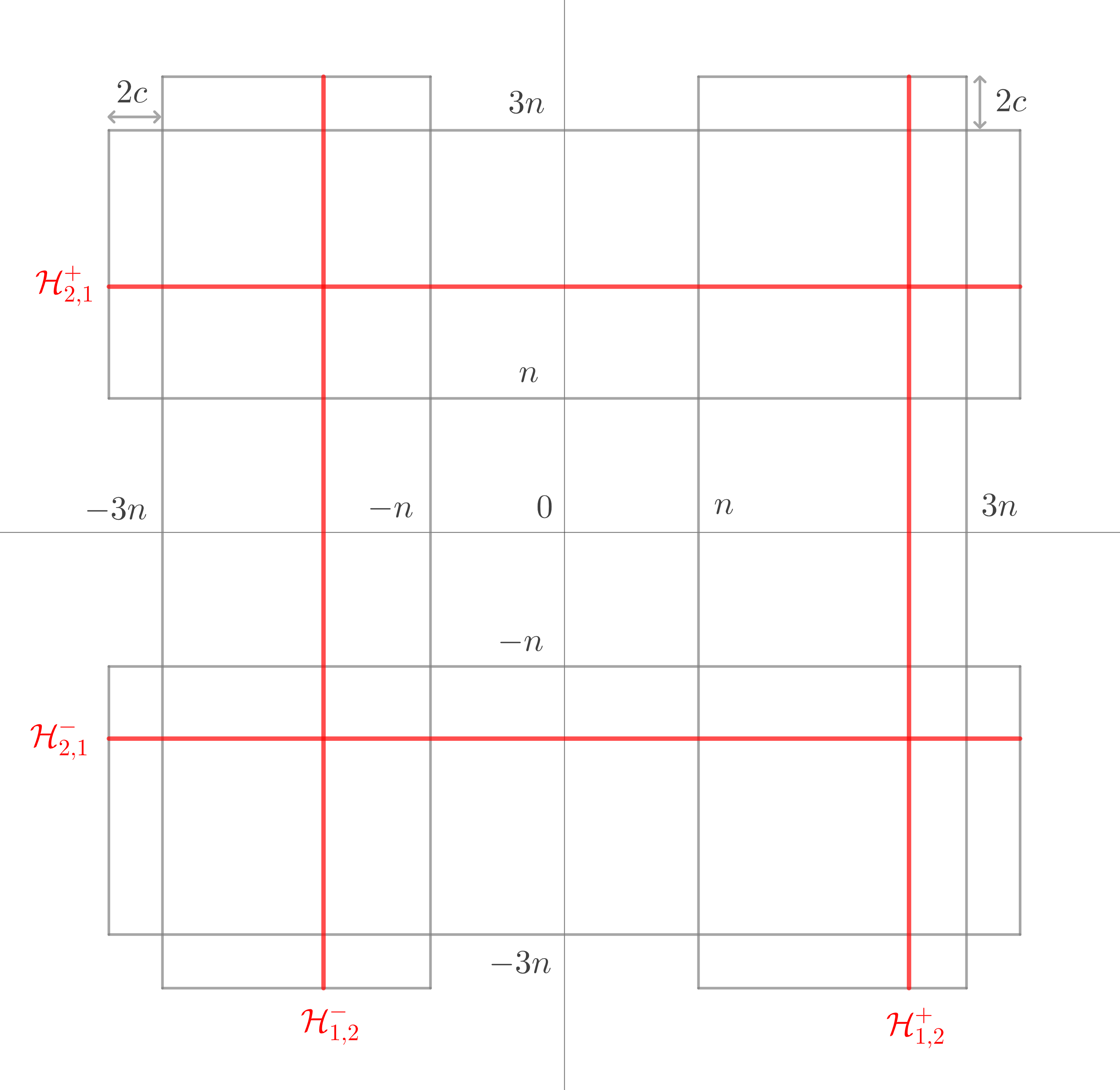}
    \caption{A depiction of the event $A_n$ in the case $d=2$. We write $\mathcal{H}_{i,j}^\varepsilon$ as a shorthand for $H_{i,j}^{\ell_{i,j}^\varepsilon}(-3n-2c,3n+2c)$.}
    \label{fig:fatannulus}
\end{figure}

\begin{lemm}
\label{lem:conv1}
    The probability of $A_n$ converges to $1$ as $n$ goes to infinity.
\end{lemm}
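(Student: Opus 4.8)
The plan is to transfer the whole question to the two-dimensional random coprime colouring on $\mathbb{Z}^2$, where Proposition~\ref{prop:arith} already controls crossing probabilities, and to bridge the gap between $\Gamma$ and $\mathbb{Z}^d$ using Lemma~\ref{lem:gammatozd}. First I would replace the colouring of $\Gamma$ coming from $(B_p)$ by the colouring of $\mathbb{Z}^d$ coming from a family $(B'_p)$ of independent uniform cosets of $p\mathbb{Z}^d$. The observation that makes this legitimate is that a point $x\in\Gamma$ is white for $(B_p)$ if and only if it lies in $W':=\mathbb{Z}^d\setminus\bigcup_p B'_p$: under the coupling behind Lemma~\ref{lem:gammatozd} one has $B_p=B'_p\cap\Gamma$, so for $x\in\Gamma$ the conditions $x\notin\bigcup_p B_p$ and $x\notin\bigcup_p B'_p$ coincide. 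Since the event $A_n$ refers only to whiteness of points of $\Gamma$, it is literally the same event for the two families, and Lemma~\ref{lem:gammatozd} reduces the statement to proving that $\mathbb{P}(A_n)\to1$ in the standard $\mathbb{Z}^d$ colouring.

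Second, I would project down to two dimensions. Fix an ordered pair $(i,j)$ with $i\neq j$ and let $\pi=\pi_{i,j}:\mathbb{Z}^d\to\mathbb{Z}^2$, $x\mapsto(x_i,x_j)$. As $\pi$ is onto and $\pi(p\mathbb{Z}^d)=p\mathbb{Z}^2$, each image $\pi(B'_p)$ is a uniform coset of $p\mathbb{Z}^2$, and these cosets are independent over $p$; hence $W_{i,j}:=\mathbb{Z}^2\setminus\bigcup_p\pi(B'_p)$ is distributed as the two-dimensional random coprime colouring. The crucial elementary implication is that $\pi$ is monotone for blackness: if $\pi(x)\notin\pi(B'_p)$ then $x\notin B'_p$, so whenever an entire segment $\{\ell\}\times\llbracket -3n-2c,3n+2c\rrbracket$ is white in $W_{i,j}$, its whole $\pi$-preimage is white in $W'$, and in particular $H_{i,j}^{\ell}(-3n-2c,3n+2c)$ (a subset of that preimage) is white. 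Consequently, finding a white vertical line at some $x_i=\ell^-\in\llbracket -3n,-n\rrbracket$ and another at some $x_i=\ell^+\in\llbracket n,3n\rrbracket$, each spanning $x_j\in\llbracket -3n-2c,3n+2c\rrbracket$ in the colouring $W_{i,j}$, already forces the part of $A_n$ associated with $(i,j)$.

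Third, I would estimate these two-dimensional crossing events. For a fixed pair and band, the relevant event is that, among the $2n+1$ lines $x_i=\ell$ in the band, at least one is entirely white along $x_j\in\llbracket -3n-2c,3n+2c\rrbracket$; by invariance under rotation by $\frac\pi2$ this has the same probability as the analogous horizontal crossing, so its failure probability equals $r_{2n+1}(6n+4c+1)$. Since $6n+4c+1\ge 2n+1>1$, Proposition~\ref{prop:arith} gives $r_{2n+1}(6n+4c+1)=\mathcal{O}\!\left(\tfrac{\log n}{n}\right)\to0$ (Claim~\ref{claim:conv} is the case $d=1$ of this). There are only $2d(d-1)$ such events — a left band and a right band for each of the $d(d-1)$ ordered pairs — so a union bound yields $\mathbb{P}(A_n^{\,\mathrm{c}})\le 2d(d-1)\cdot\mathcal{O}(\log n/n)\to0$ in the $\mathbb{Z}^d$ colouring, whence $\mathbb{P}(A_n)\to1$ there; transferring back through Lemma~\ref{lem:gammatozd} then gives $\mathbb{P}(A_n)\to1$ for $\Gamma$, as desired. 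Each step is short, and the crossing estimate itself is borrowed wholesale from Section~\ref{sec:goodcrossing}; the only points demanding genuine care are the two bookkeeping steps, namely verifying that passing from $(B_p)$ to $(B'_p)$ leaves the colour of every point of $\Gamma$ unchanged (so that Lemma~\ref{lem:gammatozd} applies to $A_n$ verbatim) and that the pushforward $\pi_{i,j}(B'_p)$ really is the genuine two-dimensional coprime colouring.
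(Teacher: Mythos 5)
Your proof is correct and is essentially the paper's argument run in reverse order: the paper establishes the $\mathbb{Z}^2$ case from Proposition~\ref{prop:arith}, lifts it to $\mathbb{Z}^d$ through the projections $x\longmapsto(x_i,x_j)$, and then transfers to $\Gamma$ with Lemma~\ref{lem:gammatozd}, which is exactly your chain read backwards. The one statement to amend is ``$B_p=B'_p\cap\Gamma$'': under the coupling one only has $B_p\subset B'_p$, and $B'_p\cap\Gamma$ can be a union of several cosets of $p\Gamma$ for the finitely many primes $p\le N$ (for instance $\Gamma=D_2$ and $p=2$, where the image of $\Gamma/2\Gamma$ in $\mathbb{Z}^2/2\mathbb{Z}^2$ has index $2$), so the two colourings of $\Gamma$ need not coincide pointwise; but whiteness for $(B'_p)$ does imply whiteness for $(B_p)$ on $\Gamma$, and since $A_n$ is increasing in the white set, this one-sided inclusion is all your transfer actually requires.
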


\begin{proof}
    The case $\Gamma=\mathbb{Z}^2$ follows from Proposition~\ref{prop:arith}.
   Let us now assume that $\Gamma=\mathbb{Z}^d$ for some $d\ge 2$. Let $(i,j)$ satisfying $i\neq j$ and $\pi:\mathbb{Z}^d \ni x\longmapsto (x_i,x_j)$. For every prime $p$, the random coset $\pi(B_p)$ is a uniform coset of $p\mathbb{Z}^2$ in $\mathbb{Z}^2$, and they are independent when $p$ varies. Therefore, as in the beginning of Section~\ref{sec:general}, the case $\Gamma=\mathbb{Z}^d$ follows from the case $\Gamma=\mathbb{Z}^2$. The general case follows from that of $\mathbb{Z}^d$ and Lemma~\ref{lem:gammatozd}.
\end{proof}

\subsubsection{Proof that there is at most one infinite white cluster}\label{sec:unique}
Let $m\ge 1$ be arbitrary. Using Lemma~\ref{lem:conv1} and proceeding as in Section~\ref{sec:general}, it suffices to prove the following lemma.
\begin{lemm}
    \label{lem:unique}
    This lemma is meant to be read within the proof of Theorem~\ref{thm:setup}. In particular, we do not restate the assumptions of this theorem, we assume that for all $i$, the map $\Gamma\ni x\longmapsto x_i\in\mathbb{Z}$ is onto, and $c$ stands for a well-chosen constant.
    
    Let $n>\max(m,c)$ and assume that for every $(i,j)$ satisfying $i\neq j$, we are given $\ell_{i,j}^-\in \llbracket -3n,-n\rrbracket$ and $\ell_{i,j}^+\in\llbracket n,3n\rrbracket$ such that all elements of $H_{i,j}^{\ell_{i,j}^-}(-3n-2c,3n+2c)$ and $H_{i,j}^{\ell_{i,j}^+}(-3n-2c,3n+2c)$ are white.
    
    Then the set $\mathcal{H}:=\bigcup_{i,j,\varepsilon}H_{i,j}^{\ell_{i,j}^\varepsilon}(-3n-c,3n+c)$ has the following three properties:
\begin{enumerate}
    \item  it contains only white vertices,\label{item:obvious}
    \item  any path from $\llbracket -m,m\rrbracket^d$ to infinity must either intersect it or get adjacent to it,\label{item:adja}
    \item for any two vertices of $\mathcal{H}$, there is a white path connecting them --- we allow paths exiting $\mathcal{H}$.\label{item:connect}
\end{enumerate}
\end{lemm}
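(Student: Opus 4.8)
The three items are of increasing difficulty, and I would reduce all of them to two uniform geometric facts extracted from the hypotheses of Theorem~\ref{thm:setup}; these are precisely what the constant $c$ and the forthcoming Claims~\ref{claim:douanes} and \ref{claim:connect} are meant to package. The \emph{customs fact} upgrades the second assumption: since the Cayley graph is invariant under translation by $\Gamma$ and each projection $x\mapsto x_i$ is onto $\mathbb{Z}$, one may translate the hyperplane $\{x_i=0\}$ to any $\{x_i=\ell\}$, so that for every $i$, every $\ell$ and every edge $\{x,z\}$ with $x_i<\ell<z_i$ there is $y\in\Gamma$ with $y_i=\ell$ adjacent to $x$ or $z$, the role of $c$ being to keep $y$ within bounded $\|\cdot\|_\infty$-distance of the edge. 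The \emph{wall-connectivity fact} upgrades the first assumption in the same way: each hyperplane $\{x_i=\ell\}$ is connected, and with room of width $c$ one can connect any two vertices of a truncated wall $H_{i,j}^\ell(a,b)$ by a path staying inside the fattened wall $H_{i,j}^\ell(a-c,b+c)$. We take $c\ge\max_{s\in S}\|s\|_\infty$ large enough for both facts to hold.

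Item~\ref{item:obvious} is then immediate: as $[-3n-c,3n+c]\subset[-3n-2c,3n+2c]$, every wall defining $\mathcal{H}$ sits inside some $H_{i,j}^{\ell_{i,j}^\varepsilon}(-3n-2c,3n+2c)$, all of whose vertices are white by hypothesis. For Item~\ref{item:adja} I would run a first-exit argument. Put $\ell_i^-:=\max_{j\ne i}\ell_{i,j}^-\in\llbracket-3n,-n\rrbracket$ and $\ell_i^+:=\min_{j\ne i}\ell_{i,j}^+\in\llbracket n,3n\rrbracket$, and let $\mathcal{C}:=\{x\in\Gamma:\forall i,\ \ell_i^-<x_i<\ell_i^+\}$, a finite set containing $\llbracket-m,m\rrbracket^d$ because $n>m$. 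A path $\kappa$ from $\llbracket-m,m\rrbracket^d$ to infinity must leave $\mathcal{C}$; at the first exit time $\tau$, set $x:=\kappa(\tau-1)\in\mathcal{C}$ and $z:=\kappa(\tau)\notin\mathcal{C}$. Say coordinate $i$ witnesses the exit through its upper wall $\ell_i^+=\ell_{i,j_0}^+$, so that $x_i<\ell_{i,j_0}^+\le z_i$. Since $x\in\mathcal{C}$ gives $x_{j_0}\in(-3n,3n)$ and $|z_{j_0}-x_{j_0}|\le c$, the transverse coordinate stays controlled: if $z_i=\ell_{i,j_0}^+$ then $z\in H_{i,j_0}^{\ell_{i,j_0}^+}(-3n-c,3n+c)\subset\mathcal{H}$ and $\kappa$ meets $\mathcal{H}$; if $x_i<\ell_{i,j_0}^+<z_i$, the customs fact produces a witness $y$ on the wall hyperplane adjacent to the path, which again falls in the truncated wall and shows $\kappa$ gets adjacent to $\mathcal{H}$. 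This adjacency is all Item~\ref{item:adja} needs, since in the uniqueness argument a white path adjacent to the white set $\mathcal{H}$ merges with it.

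Item~\ref{item:connect} is the crux. Within a single wall, the wall-connectivity fact joins any two vertices of $H_{i,j}^\ell(-3n-c,3n+c)$ by a white path that stays inside the white set $H_{i,j}^\ell(-3n-2c,3n+2c)$. It then remains to glue distinct walls together. The walls indexed by $(i,j)$ and $(j,i)$ form, in the $(x_i,x_j)$-plane, a rectangular frame whose sides overlap near the corners, since a vertex with $x_i=\ell_{i,j}^\varepsilon$ and $x_j=\ell_{j,i}^{\varepsilon'}$ (both in $\llbracket n,3n\rrbracket\subset[-3n-c,3n+c]$) would lie in both walls at once. Such a common vertex need not exist in $\Gamma$, because the pair projection $x\mapsto(x_i,x_j)$ need not be surjective, and this is exactly where the fattening by $c$ is indispensable: the wall-connectivity fact lets one cross from one side of the frame to the neighbouring side through a short white path in the fattened corner region. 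Chaining these corner crossings over all coordinate planes places the $2d(d-1)$ walls in a single white component.

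I expect the genuine obstacle to be this between-wall gluing in Item~\ref{item:connect}: connecting the walls into one white connected set at corners that $\Gamma$ may fail to contain, uniformly in $(i,j,\varepsilon)$ and in $n$. This uniformity is what forces the precise choice of $c$ and the two nested ranges, and it is the content I would isolate as Claim~\ref{claim:connect}. By comparison, Item~\ref{item:adja} is routine once the customs fact is available, its only delicate point being to ensure that the witness provided at a strict crossing lands in the truncated wall $\mathcal{H}$ rather than merely in the wider white wall --- again governed by the gap between the ranges $3n+c$ and $3n+2c$, and it is for this reason that I would demand from Claim~\ref{claim:douanes} a witness close to the interior endpoint $x$.
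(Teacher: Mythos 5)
Your overall architecture coincides with the paper's: the same two auxiliary claims (your ``customs fact'' is Claim~\ref{claim:douanes}, your ``wall-connectivity fact'' is the first half of Claim~\ref{claim:connect}), the same trivial Item~\ref{item:obvious}, and for Item~\ref{item:adja} your first-exit box $\mathcal{C}$ is exactly the paper's first-crossing time, with the same two-case split ($z_i=\ell$ versus a strict crossing handled by the customs fact). Those parts are correct as written.

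The gap is in the inter-wall gluing of Item~\ref{item:connect}. Your corner crossings only ever join a wall indexed by $(i,j)$ to a wall indexed by $(j,i)$. The relation ``$(i,j,\varepsilon)$ is corner-glued to $(j,i,\varepsilon')$'' has as its connected components the sets $\{(i,j,\pm),(j,i,\pm)\}$, one per \emph{unordered} pair $\{i,j\}$; so for $d\ge 3$ ``chaining these corner crossings over all coordinate planes'' never connects, say, the $(1,2)$-walls to the $(1,3)$-walls or to the $(3,4)$-walls, and $\mathcal{H}$ is not shown to lie in a single white component. The paper's proof avoids this by gluing $(i,j,\varepsilon)$ to \emph{every} $(i',j',\varepsilon')$ with $i'=j$ (not only $(j',\varepsilon')=(i,\varepsilon')$): the second half of Claim~\ref{claim:connect} provides a path inside the white fattened wall $(i,j,\varepsilon)$ whose $j$-th coordinate sweeps from below $-3n$ to above $3n$, hence crosses the level $\ell_{i',j'}^{\varepsilon'}$; an intermediate-value step plus Claim~\ref{claim:douanes} then lands you in the wall $(i',j',\varepsilon')$. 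The directed graph on triples with an edge from $(i,j,\varepsilon)$ to $(i',j',\varepsilon')$ exactly when $i'=j$ lets you reach any triple from any other in at most two steps, which is what actually connects all $2d(d-1)$ walls. Two smaller imprecisions in the same spirit: your corner crossing is attributed to the wall-connectivity fact alone, whereas it genuinely needs the customs fact as well (there may be no lattice point at the corner, and the hop onto the transverse wall is exactly a Claim~\ref{claim:douanes} application); and your statement of the wall-connectivity fact omits the spanning property ($u_j<a$, $v_j>b$) that the sweeping argument requires, so the claim you propose to isolate is weaker than the one the proof needs.
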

Lemma~\ref{lem:unique} indeed guarantees uniqueness of the infinite white cluster, as two adjacent white vertices necessarily belong to the same white cluster. On the contrary, this condition does not suffice to prevent existence of infinite black clusters.

\begin{proof}[Proof of Lemma~\ref{lem:unique}]
Let us make the assumptions of Lemma~\ref{lem:unique} and establish the three desired properties. Item~\ref{item:obvious} holds by assumption as, for all $(i,j,\varepsilon)$, we have  the inclusion $H_{i,j}^{\ell_{i,j}^\varepsilon}(-3n-c,3n+c)\subset H_{i,j}^{\ell_{i,j}^\varepsilon}(-3n-2c,3n+2c)$.

Now, focus on Item~\ref{item:adja}. Let $\kappa$ be a path from $\llbracket-m,m\rrbracket^d$ to infinity. Let us consider the smallest possible $k$ such that there is some $(i,j,\varepsilon)$ satisfying $\varepsilon\kappa(k)_i\ge \varepsilon\ell_{i,j}^\varepsilon$. Such a finite $k$ has to exist as the path $\kappa$ goes to infinity, and $k$ cannot be zero as $n> m$. We fix $(i_0,j_0,\varepsilon_0)$ such that $\varepsilon_0\kappa(k)_{i_0}\ge \varepsilon_0\ell_{i_0,j_0}^{\varepsilon_0}$.
Let $x:=\kappa(k-1)$. By minimality of $k$, we know that $\varepsilon_0 x_{i_0}<\varepsilon_0\ell_{i_0,j_0}^{\varepsilon_0}$. Still by minimality of $k$, we have $-3n\le \ell_{j_0,i_0}^{-}< x_{j_0}<\ell_{j_0,i_0}^{+}\le 3n$.

\begin{enonce}{Claim}
    \label{claim:douanes}
    Recall that, on top of the assumptions of Theorem~\ref{thm:setup}, we assume that for all $i$, the map $\Gamma\ni x\longmapsto x_i\in\mathbb{Z}$ is onto. For all $c\ge 2\max_{s\in S}\|s\|_\infty$, the following statement holds:

    Let $(i,j)\in\llbracket 1,d\rrbracket^2$ be such that $i\neq j$. Let $\ell\in \mathbb{Z}$ and let $(a,b)\in\mathbb{Z}^2$ be such that $a\le b$.
    Let $(x,z)$ be a pair of adjacent vertices in $\mathrm{Cayley}(\Gamma,S)$ satisfying $x_i<\ell<z_i$ or $z_i<\ell<x_i$. Further assume that $a\le x_j\le b$.
    
    Then there is some $y\in H_{i,j}^\ell(a-c,b+c)$ that is adjacent to $x$ or $z$ and that satisfies $y_i=\ell$.
\end{enonce}

The easy proof of Claim~\ref{claim:douanes}, which relies on the second assumption of Theorem~\ref{thm:setup}, is deferred to Section~\ref{sec:douanes}. For now, let us use it to conclude the proof of Item~\ref{item:adja}. Take $c\ge 2\max_{s\in S}\|s\|_\infty$. Let $z:=\kappa(k)$ and $\ell:=\ell_{i_0,j_0}^{\varepsilon_0}$. If $\varepsilon_0z_{i_0}> \varepsilon_0\ell$, then we get the desired result by applying Claim~\ref{claim:douanes} with $a=-3n$ and $b=3n$. The remaining case is when $z_{i_0}=\ell$. In this case, we conclude by observing that $z\in H_{i_0,j_0}^\ell(-3n-c,3n+c)\subset\mathcal{H}$. Indeed, we have $z_{i_0}=\ell$ and, as $z$ is adjacent to $x$ which satisfies $-3n<x_{j_0}<3n$, the inequality $c\ge \max_{s\in S}\|s\|_\infty$ guarantees that $-3n-c<z_{j_0}<3n+c$.

At last, let us show Item~\ref{item:connect}. To this end, we rely on the following geometrical claim. Its proof, which relies on the first assumption of Theorem~\ref{thm:setup}, is postponed to Section~\ref{sec:connect}.

\begin{enonce}{Claim}
    \label{claim:connect}
    Recall that, on top of the assumptions of Theorem~\ref{thm:setup}, we assume that for all $i$, the map $\Gamma\ni x\longmapsto x_i\in\mathbb{Z}$ is onto. For all $c$ large enough, the following statement holds:

    Let $(i,j)\in\llbracket 1,d\rrbracket^2$ be such that $i\neq j$. Let $\ell\in \mathbb{Z}$ and let $(a,b)\in\mathbb{Z}^2$ be such that $a\le b$.
    
    Then, we have the following properties:
    \begin{itemize}
        \item any two points of $H_{i,j}^\ell(a-c,b+c)$ can be joined via a path lying in $H_{i,j}^\ell(a-2c,b+2c)$,
        \item it is possible to find two vertices $u$ and $v$ in $H_{i,j}^\ell(a-c,b+c)$ and a path $\kappa$ joining them in $H_{i,j}^\ell(a-c,b+c)$ in such a way that $u_j<a$, $v_j>b$ and the path $\kappa$ stays at $\|~\|_\infty$-distance at most $c$ from the segment $\ell e_i+[a,b]e_j$.
    \end{itemize}
\end{enonce}

Let $c\ge\max_{s\in S}\|s\|_\infty$ be such that the statements of Claims~\ref{claim:douanes} and \ref{claim:connect} hold. Applying the first part of the statement with $a=-3n$ and $b=3n$ gives that for every $(i,j,\varepsilon)$, any two vertices of $\mathcal{H}_{i,j}^\varepsilon(c):=H_{i,j}^{\ell_{i,j}^\varepsilon}(-3n-c,3n+c)$ can be joined by a path lying in the white set $\mathcal{H}_{i,j}^\varepsilon(2c)$. Therefore, it now suffices to prove that for any two $(i,j,\varepsilon)$ and $(i',j',\varepsilon')$, there is a white path joining $\mathcal{H}_{i,j}^{\varepsilon}(c)$ and $\mathcal{H}_{i',j'}^{\varepsilon'}(c)$. Proving this under the additional assumption that $i'=j$ suffices as well because, in the oriented graph with vertex-set all possible $(i,j,\varepsilon)$ and and an edge from $(i,j,\varepsilon)$ to $(i',j',\varepsilon')$ exactly when $i'=j$, one can reach any vertex from any vertex in finitely many steps --- actually at most 2.

Let $(i,j,\varepsilon)$ and let $(i',j',\varepsilon')$ be such that $i'=j$. By the second part of Claim~\ref{claim:connect}, we have vertices $u$ and $v$ in $\mathcal{H}_{i,j}^\varepsilon(c)$ such that $u_j<-3n\le \ell_{i',j'}^{\varepsilon'}\le 3n<v_j$ and a path connecting them in $\mathcal{H}_{i,j}^\varepsilon(c)$ that stays at $\|~\|_\infty$-distance at most $c$ from $\ell e_i+[-3n,3n]e_j$. By intermediate-value reasoning on the path, we can find vertices $x$ and $z$ on this path that are adjacent and satisfy $x_{i'}=x_j<\ell_{i',j'}^{\varepsilon'}\le z_j=z_{i'}$. Besides, we have $-3n\le x_{j'}\le 3n$ because either $j'=i$ and $x_{j'}=\ell$, or we have $j'\notin\{i,j\}$ and then comparison with $\ell e_i+[-3n,3n]e_j$ yields $|x_{j'}|\le c\le n$. Likewise, the inequality $-3n\le z_{j'}\le 3n$ holds. If we have $z_{i'}=\ell_{i',j'}^{\varepsilon'}$, then we are done, as $z$ belongs to $\mathcal{H}_{i',j'}^{\varepsilon'}(c)$ in this case. Otherwise, we have $x_{i'}<\ell_{i',j'}^{\varepsilon'}<z_{i'}$ and using Claim~\ref{claim:douanes} with $(i',j',\ell_{i',j'}^{\varepsilon'})$ and $(a,b)=(-3n,3n)$ concludes.
\end{proof}

\subsubsection{Proof of the existence of an infinite white cluster}
\label{sec:unique-setup}
By Proposition~\ref{prop:exist-white}, only the 2-dimensional case remains. By Claims~\ref{claim:douanes} and \ref{claim:connect}, one can simply mimic the staircase argument of Section~\ref{sec:2-dim}. The only adjustment to be made is that the long side of rectangles should not be defined by the values $(0,2^{n+1})$ but $(-2c,2^{n+1}+2c)$ instead.\hfill \qed

\subsubsection{Proof of Claim~\ref{claim:douanes}}
\label{sec:douanes}
Let us make the assumptions of the claim. Without loss of generality, we assume that $x_j<\ell<z_j$ rather than $z_j<\ell<x_j$ --- the proof of the other case is similar, or we can reduce it to the first one via minus the identity.

Let us first prove the result under the additional assumption $\ell=0$. In this case, we can use the second assumption of Theorem~\ref{thm:setup} to find some $y$ that is a neighbour of either $x$ or $z$ and such that $y_i=0$. Therefore, in $\mathsf{Cayley}(\Gamma,S)$, the vertex $y$ lies at distance at most 2 from $x$. As $a\le x_j\le b$, we have
\[
a-c\le a-2\max_{s\in S}\|s\|_\infty \le y_j\le b+2\max_{s\in S}\|s\|_\infty \le b+c.
\]
Combined with $y_i=0$, this gives $y\in H_{i,j}^\ell(a-c,b+c)$.

If $v\in \Gamma$, by translating everything by $v$, the case $\ell=v_i$ follows from the case $\ell=0$. As we assumed that, for all $i$, the map $\Gamma\ni x\longmapsto x_i\in\mathbb{Z}$ is onto, the claim holds for all values of $\ell$.\hfill \qed

\begin{rema}
    \label{rem:st}
    With the same argument, if we were to make the second assumption of Theorem~\ref{thm:setupblack} instead, we would be able to prove a variation of Claim~\ref{claim:douanes} with the relaxed assumption ``$x_i\le\ell\le z_i$ or $z_i\le \ell\le x_i$'' and with conclusion ``at least one of the vertices $x$ and $z$ must belong to $H_{i,j}^\ell(a-c,b+c)$''. Besides, taking $c\ge \max_{s\in S}\|s\|_\infty$ would suffice.
\end{rema}

\subsubsection{Proof of Claim~\ref{claim:connect}}
\label{sec:connect} Let $N$ be the index of $\Gamma$ in $\mathbb{Z}^d$. Let $(e_1,\dots,e_d)$ be the canonical basis of $\mathbb{Z}^d$. For every $i\in\llbracket 1,d\rrbracket$, there must be two distinct values $a_i,b_i\in\llbracket 0,N\rrbracket$ such that $a_i e_i$ and $b_i e_i$ belong to the same coset of $\Gamma$ in $\mathbb{Z}^d$. By taking the difference of two such values, we get some $c_i\in \llbracket 1,N\rrbracket$ such that $c_i e_i$ belongs to $\Gamma$. Therefore, $\Gamma$ contains $\bigoplus_i c_i\mathbb{Z}e_i$.

Let $(i,j,\ell,a,b)$ be as in the claim. Proceeding as in Claim~\ref{claim:douanes}, without loss of generality, we may and will assume that $\ell=0$.

Let us start by proving the first conclusion. Let $H_i:=\{x\in \Gamma\,:\,x_i=0\}$.
By the first assumption of Theorem~\ref{thm:setup}, for every $x\in H_i$, for every $c$ large enough, there is a path from the origin to $x$ that stays in $H_i\cap \llbracket -c,c\rrbracket^d$. Therefore, for every $c$ large enough, every element of the finite set $H_i\cap \prod_j \llbracket -c_j,c_j\rrbracket$ can be reached from the origin by a path that stays in $H_i\cap \llbracket -c,c\rrbracket^d$. Pick such a $c$. Observe that, by reversing paths, every element $x$ of the finite set $H_i\cap \prod_j \llbracket -c_j,c_j\rrbracket$ can be reached from the origin by a path that stays in $H_i\cap \llbracket x_k-c,x_k+c\rrbracket^d$.

Let $x$ and $y$ be two elements of $H_{i,j}^0(a-c,b+c)$. By our choice of $c$, the vertex $x$ (resp.~$y$) can be connected within $H_{i,j}^0(a-2c,b+2c)$ to some $x'$ (resp. $y'$) that belongs to $H_{i,j}^0(a-c,b+c)\cap \bigoplus_k c_k\mathbb{Z}e_k$. It thus suffices to connect $x'$ to $y'$ in $H_{i,j}^0(a-2c,b+2c)$. To do so, pick a geodesic path from $(\tfrac{x'_1}{c_1},\dots,\frac{x'_d}{c_d})$ to $(\tfrac{y'_1}{c_1},\dots,\frac{y'_d}{c_d})$ in the usual hypercubic lattice and then dilate it by $c_i$ in each direction. This gives a finite sequence of elements of $H_{i,j}^0(a-c,b+c)$ that starts at $x'$ and ends at $y'$. By our choice of $c$, each vertex $z$ of the sequence can be connected to the next one $z'$ by remaining in $H_i$ and by staying at $\|~\|_\infty$-distance at most $c$ from both $z$ and $z'$. Therefore, by concatenation, there is a path from $x$ to $z$ that stays in $H_{i,j}^0(a-2c,b+2c)$.

The proof of the second conclusion is similar. Pick $u=\lfloor \tfrac{a}{c_j}\rfloor c_j e_j$ and $v=\lceil \tfrac{b}{c_j}\rceil c_j e_j$. As $c\ge c_j$, the vertices $u$ and $v$ belong to $H_{i,j}^0(a-c,b+c)\cap c_j\mathbb{Z}e_j$. Consider the finite sequence of vertices leading from $u$ to $v$ by incrementing each time the position by $c_je_j$. Treating this sequence as in the previous paragraph produces a path that satisfies the desired properties. \hfill\qed

\subsection{Proof of Theorem~\ref{thm:setupblack}}
\label{sec:proba-proof-noire}
We want to proceed as in the proof of Section~\ref{sec:unique} except that we need to strengthen Item~\ref{item:adja} to ``any path from $\llbracket -m,m\rrbracket^d$ to infinity must intersect $\mathcal{H}$''. Under the assumptions of Theorem~\ref{thm:setupblack}, this is indeed possible, by Remark~\ref{rem:st}. Existence of the infinite white cluster is obtained as in Section~\ref{sec:unique-setup}.\hfill \qed

\subsection{The lattices $D_d$, $E_8$, and the Leech lattice fit our setup}
\label{sec:setup-proof}
In this section, we prove that $D_d$ satisfies the assumptions of Theorem~\ref{thm:setupblack} for $d\ge 3$ and that $E_8$ and the Leech lattice satisfy the assumptions of Theorem~\ref{thm:setup}. When we work with a lattice $\Gamma$ in $\mathbb{R}^d$, given $i\in\llbracket 1,d\rrbracket$, we write $H_i:=\{x\in\Gamma\,:\,x_i=0\}$.

\subsubsection{The lattice $D_d$ satisfies the assumptions of Theorem~\ref{thm:setupblack} for $d\ge3$.} 
\label{sec:dede}
Let us prove that the first hypothesis holds. By symmetry, let us assume without loss of generality that $i=d$. Let $x\in H_d$ and let us prove that there is a path connecting $x$ to 0 in $H_d$. By using the generators $\pm e_1\pm e_2\in S \cap  H_d$, we can stay in $H_d$ and go from $x$ to some vertex $y$ such that $y_1=0$. Similarly, by using $\pm e_j\pm e_{j+1}\in S \cap H_d$ for $j\le d-2$, we can find a vertex $z$ that can be reached from $x$ inside $H_d$ and that satisfies $z_1=\dots=z_{d-2}=0$.
As $z$ belongs to $H_d$, we also have $z_d=0$, and the last thing to do is to be able to adjust the $(d-1)^\text{th}$ entry. Since $z$ belongs to $H_d$, it must be the case that $z_{d-1}$ is even, by definition of $D_d$. Therefore, we would be done if $2e_{d-1}$ was an element of $S$. This is not the case but it can be emulated via a path of length 2 that stays in $H_d$, as $2e_{d-1}=(e_{d-1}+e_1)+(e_{d-1}-e_1)$ --- this is where we use the fact that $d\ge 3$. The first hypothesis is thus established.
As for the second one, it is clearly met as $S\subset \llbracket -1,1\rrbracket^d$.

\subsubsection{The lattice $E_8$ satisfies the assumptions of Theorem~\ref{thm:setup}.}
Let us prove that the first hypothesis holds. By symmetry, let us assume without loss of generality that $i=8$. Let $x\in H_8$ and let us prove that there is a path connecting $x$ to 0 in $H_8$. By definition of $E_8$ and as $x_8=0$, all entries of $x$ are even. Proceeding as in Section~\ref{sec:dede} with $\pm 2e_j\pm 2e_{j+1}$ for $j<7$, we reduce to the case where $x_1=\dots=x_6=0$ and $x_8=0$. By definition of $E_8$, this entails that $x_7$ is a multiple of 4. We conclude by emulating the missing generator $4e_7$ as the sum of $2e_7+2e_1$ and $2e_7-2e_1$.

Let us move on to the second hypothesis. Let $x$ and $z$ be two adjacent elements of $E_8$ that satisfy $x_i<0<z_i$ for some given $i\in \llbracket1,8\rrbracket$. As $S\subset \llbracket-2,2\rrbracket^8$, it must be the case that $x_i=-1$ and $z_i=1$. Using a generator of the form $(\pm1,\dots,\pm 1)$, we can thus find a neighbour $y$ of $x$ that belongs to $H_i$, and we could do the same for $z$.

\subsubsection{The Leech lattice satisfies the assumptions of Theorem~\ref{thm:setup}.}
Let us prove that the first hypothesis holds. As $M_{24}$ acts transitively on $\llbracket1,24\rrbracket$, let us assume without loss of generality that $i=24$ --- this is not essential, we could keep an abstract $i$ all along. Let $x\in H_{24}$ and let us prove that there is a path connecting $x$ to 0 in $H_{24}$. By Item~\ref{item:bit-zero} in the definition of the Leech lattice and as $x_{24}=0$, all entries of $x$ are even. By using generators of the form $\pm 4e_j\pm 4e_{j+1}$ for $j$ in $\llbracket1,22\rrbracket$, we can further assume that, for every $j\in\llbracket1,22\rrbracket$, we have $x_j\in\{0,2\}$. Since we can emulate $8e_{23}$ as $(4e_{23}+4e_1)+(4e_{23}-4e_1)$ and because we know $x_{23}$ to be even, we can also assume that $x_{23}\in \{0,2,4,6\}$. By Item~\ref{item:bit-two} in the definition of the Leech lattice, this gives $x_{23}\in\{0,2\}$, so that all $x_j$ belong to $\{0,2\}$. By Item~\ref{item:bit-one} in the definition of the Leech lattice, $\tfrac{x}{2}$ is an element of the extended binary Golay code, if one identifies $\{0,1\}$ with $\mathbb{F}_2$. By property of the extended binary Golay code, the number of nonzero entries of $x$ belongs to $\{0,8,12,16,24\}$. Denote this number by $K$. As $x_{24}=0$, the value $24$ is forbidden for $K$.

If $K=0$, then $x=0$ and we are done. If $K=8$, then $x\in S$ and we are done as well. Let us deal with the case $K=16$. The complement of the support of $x$ is then an octad. One can thus find two other octads such that the three of them form a trio. In other words, the support of $x$ can be seen as the union of two disjoint octads, which necessarily avoid $24$. This completes the case $K=16$, as this provides a path of length 2 connecting $x$ to 0 inside $H_{24}$. We are left with the case $K=12$. Proceeding as for 16, it suffices to prove that every dodecad avoiding $24$ can be realised as the symmetric difference of two octads that avoid 24 as well.

Let $\Delta$ be a dodecad avoiding $24$. It is known that $\Delta$ is expressible as the symmetric difference of two octads. Let us fix such a pair of octads. If they avoid 24, then we are done. Otherwise, pick some $k$ that belongs to none of the two octads --- in particular not to $\Delta$. Let $\Delta'$ denote the complement of $\Delta$, which is also a dodecad. As the group $M_{12}(\Delta)$ acts transitively on $\Delta'$, we can find some $\sigma\in M_{12}(\Delta)$ such that $\sigma(k)=24$. Applying $\sigma$ to our pair of octads produces a pair of octads avoiding $24$ and the symmetric difference of which is $\Delta$.

Let us move on to the second hypothesis. Let $i\in \llbracket1,24\rrbracket$. By looking at the list of elements of $S$, the image of $S$ by $x\longmapsto x_i$ is $\hat{S}:=\{-4,-3,-2-1,0,1,2,3,4\}$. Indeed, there is an octad containing $i$, and both vectors $(0,\dots,0)$ and $(1,\dots,1)$ belong to the extended binary Golay code. Let $x$ and $z$ be two adjacent elements of the Leech lattice that satisfy $x_i<0<z_i$. We must thus have $x_i\in \{-3,-2,-1\}$. But as $1$, $2$ and $3$ belong to $\hat{S}$, we can find a neighbour $y$ of $x$ such that $y_i=0$. All proofs are now complete.\hfill\qed

\bibliographystyle{smfalpha}
\bibliography{biblio}

\appendix
\section{A brief introduction to the Furstenberg--Zimmer Theorem}
\label{appen}

Let $\mathbb{Z}\curvearrowright (E,\mathscr{E},\nu)$ be a measurable measure-preserving action of $\mathbb{Z}$ on a probability space with a countably generated $\sigma$-field $\mathscr{E}$. We capture this measure-preserving dynamical system by the bijection $T:E\to E$ corresponding to the action of $1\in\mathbb{Z}$. This system is said to be \emph{weakly mixing} if, for every complex-valued $f\in \mathrm{L}^2(E,\nu)$, we have
\[
\tfrac{1}{n}\sum_{i=1}^n\left|\langle T^i f,f\rangle-\left({\textstyle \int f\,\mathrm{d}\nu}\right)^2\right|\xrightarrow[n\to\infty]{} 0.
\]
The system is \emph{compact} if for every complex-valued $f\in \mathrm{L}^2(E,\nu)$, the closure of the orbit $\{T^n f\,:\,n\in\mathbb{Z}\}$ in $\mathrm{L}^2$ is compact. A good way to digest these definitions is to take $f$ to be an indicator function.

\begin{enonce}{Exercise}[When each point has a period]
Assume that for almost every $x\in E$, there is some $n\ge 1$ such that $x$ is fixed by the action of $n\mathbb{Z}$. Prove that the system under consideration is compact.
\end{enonce}

\begin{enonce}{Exercise}[Kronecker systems]
\label{exo}
Let $K$ be a topological group that is abelian, metrisable and compact. Endow it with its Borel $\sigma$-algebra and its probability Haar measure. Let $\mathbb{Z}$ act by translations on $K$, i.e.~let $T$ be of the form $k\longmapsto k+a$ for some $a$.
Prove that the system under consideration is compact.
\end{enonce}

Recall that a dynamical system is \emph{ergodic} if and only if all $\mathbb{Z}$-invariant measurable subsets of $E$ have probability 0 or 1. In a way reminiscent of ``decomposing a group action into orbits'', every dynamical system can be ``decomposed into ergodic subsystems'', and ergodic systems cannot be further decomposed in this sense. This leaves us with the task of understanding ergodic systems. As both weakly mixing systems and compact systems are rather well understood, it would be satisfactory to be able to reduce arbitrary ergodic systems to situations that are either weakly mixing or compact.

Without getting through all definitions, the Furstenberg--Zimmer Theorem states that ``every ergodic system can be decomposed as a {\footnotesize(possibly trivial)} transfinite tower of extensions over the trivial one-point system such that each extension is compact conditioned on what is below it, except for the {\footnotesize(possibly trivial)} final extension which is weakly mixing conditioned on what is below it''. Once we have pumped out all order, what remains is disorder.

The reader may be puzzled to read the statement of a decomposition theorem of ergodic systems, knowing that ``ergodic'' means ``indecomposable in some sense''. All lies in the ``in some sense''. Ergodicity is an indecomposability relative to disjoint unions, to injections. The Furstenberg--Zimmer Theorem is a decomposition in terms of factors, a notion that revolves around surjections, projections, products. 
Let us go back to our analogy with group actions and orbits, where ``ergodic'' was analog to an action having exactly one orbit: an action with a single orbit may very well have interesting ``projective'' decompositions. For instance, let $\mathbb{Z}/4\mathbb{Z}$ act naturally on its 2-binary-digits representation $\{0,1\}^2$. This action indeed has a single orbit but it can be understood ``in layers'', by revealing first the digit of weight $2^0$ {\footnotesize(which corresponds to an action of $\mathbb{Z}/4\mathbb{Z}$ on $\{0,1\}$)} and only later that of weight $2^1$ {\footnotesize(revealing bits in the other order would not work: why?)}.

To avoid technicalities, we have focused on $\mathbb{Z}$. Such a theory can be developed for $\mathbb{Z}^d$ as well. Additional care is needed to state a clean decomposition theorem in this context, as some extensions of the decomposition may need to be ``compact in some directions and weakly mixing in others''.

\section{Hands-on explanations regarding the almost-periodic content of Bernoulli line percolation}
\label{appen:other}

Let us recall the definition of the so-called Bernoulli line percolation \cite{marcelo-vladas}. For simplicity, we pick $p\in[0,1]$ and work with $\mathbb{Z}^2$ endowed with its usual square grid structure. The process is defined as follows: independently, for each vertical and each horizontal line, erase all of its vertices with probability $1-p$. A vertex is removed if it is erased at least once and retained otherwise.

This process has both weakly-mixing and almost-periodic content. Let us explain this by focusing on one step of the construction, namely the definition of the ``erased due to horizontal lines'' status. If one contracts each horizontal line to a point, this is a collection of independent Bernoulli random variables of parameter $p$, the archetype of weakly mixing systems. We could say that this step of the construction is weakly mixing in the transverse direction, namely with respect to vertical translations. On the other hand, every erasure landscape defined by this step is, by definition, invariant under horizontal translations {\footnotesize(we insist that we speak of invariance of almost every individual configuration, not of the probability distribution)}. We could thus say that it is periodic with period given by the vector $(1,0)$. This observation accounts for the fact that the Furstenberg--Zimmer decomposition of Bernoulli line percolation has nontrivial almost-periodic content. Very similar explanations apply to all other percolation processes of the list \cite{jonasson, hoffman, pete, brochette, marcelo-vladas, ksv, hsst}.
\end{document}